\theoremstyle{plain}
\newtheorem{theo}{Theorem}[section]
\newtheorem{lem}[theo]{Lemma}
\newtheorem{cor}[theo]{Corollary}
\newtheorem{prop}[theo]{Proposition}
\theoremstyle{definition}
\def\e{\varepsilon}
\def\a{\alpha}
\def\b{\beta}
\def\P{\mathfrak P}
\def\th{\theta}
\def\d{\delta}
\def\vt{\vartheta}
\def\D{\Delta}
\def\G{\Gamma}
\def\g{\gamma}
\def\DD{\mathcal D}
\def\l{\lambda}
\def\L{\Lambda}
\def\EE{\mathcal E}
\def \R{\mathbb R}
\def \N{{\mathbb N}}
\def \Z{\mathbb Z}
\def \H{\mathbb H}
\def \F{\mathbb F}
\def \P{\mathbb P}
\def\om{\omega}
\def\Om{\Omega}
\def\C{\mathbb C}
\def\CC{\mathcal C}
\def\O{\mathcal O}
\def\({\biggl(}
\def\){\biggr)}
\def\<{\bold\langle}
\def\>{\bold\rangle}
\def\LL{{{\mathcal L}}}
\def\M{\mathcal {P}}
\begin{document}
\title{Analyticity of the entropy for some random walks}
\author{Fran\c cois Ledrappier}
\address{LPMA, UMR CNRS 7599, Universit\'e Paris 6, Bo\^ite Courrier 188, 4, Place Jussieu, 75252 PARIS cedex 05, France}\email{francois.ledrappier@upmc.fr}
\keywords{entropy, free group}
\subjclass[2000]{60G50; 60B15}


\maketitle
\begin{abstract}
We consider non-degenerate, finitely supported random walks on  a free group. We show that the entropy and the linear drift vary analytically with the probability of constant support.
\end{abstract}
\section{Introduction}

Let $F$ be a finitely generated group  and for $x \in F$, denote $|x|$ the word length of $x$. Let $p$ be a finitely supported probability measure on $F$ and define inductively,
with $ p^{(0)} $ being the Dirac measure at the identity $e$,
$$ p^{(n)} (x) \; = \; [p^{(n-1)} \star p] (x) \; = \;  \sum _{y \in F} p^{(n-1)} (xy^{-1}) p(y).$$
Some of the asymptotic properties of the probabilities $p^{(n)}$ as $n \to \infty $  are reflected in  two nonnegative numbers, the entropy $h_p$ and the linear drift $\ell _p$:
$$ h_p := \lim_n -\frac{1}{n} \sum _{x\in F} p^{(n)}(x) \ln p^{(n)}(x),  \quad \ell _p := \lim _n \frac{1}{n} \sum _{x \in F} |x| p^{(n)}(x).$$
Erschler asks whether  $h_p$ and $\ell _p$ depend continuously on $p$ (\cite{E}). In this note, we fix a finite set $B \subset F$ such that $\cup _n B^n = F$ and we consider probability measures in $\M(B)$, where $\M(B)$ is the set of probability measures $p$ such that $p(x) >0$ if, and only if, $x \in B$. The set $\M(B)$ is naturally identified with an open subset of the probabilities on $B$ which is  an open bounded convex domain in $\R^{|B|- 1}$. We show: \begin{theo}\label{main} Assume $F = \F_d$ is the free group with $d$ generators, $B$ is a finite subset of $F$  such that $\cup _n B^n = F$.  Then, with the above notation, the functions $p \mapsto h_p$ and $p \mapsto \ell _p$ are real analytic on $\M(B)$. \end{theo}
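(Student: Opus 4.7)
The plan is to encode the walk by a finite irreducible Markov chain whose transition kernel is linear in $p$, and to express both $h_p$ and $\ell_p$ as real-analytic functions of this chain's stationary distribution and transition probabilities.

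\smallskip
\emph{Step 1 (Suffix Markov chain).} Fix an integer $R>\max_{y\in B}|y|$ and let $\Si$ denote the finite set of reduced words of length $R$ in the free generators of $\F_d$. Because cancellation in a free group is confined to the seam where two concatenated words meet, for any reduced $w$ with $|w|\ge R$ and any $y\in B$ the length-$R$ suffix of the reduced product $wy$ is a function $\Phi(s,y)\in\Si$ of $y$ and of the length-$R$ suffix $s$ of $w$ alone. Setting
\[
Q_p(s,s')\;=\;\sum_{y\in B,\ \Phi(s,y)=s'}p(y),
\]
the length-$R$ suffix process $(s_n)$ of $X_n$ (well defined once $|X_n|\ge R$, which holds almost surely for all large $n$ by non-amenability of $\F_d$) is a Markov chain on $\Si$ with transition kernel $Q_p$. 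The hypotheses $\bigcup_n B^n=\F_d$ and $p>0$ on $B$ imply that $Q_p$ admits a unique essential class $\Si_0\subseteq\Si$ on which it is irreducible and aperiodic, and hence a unique stationary probability $\pi_p$.

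\smallskip
\emph{Step 2 (Analyticity of $\pi_p$).} The entries of $Q_p$ are linear in $p\in\M(B)$. On $\Si_0$, $\pi_p$ is the unique solution of the non-singular linear system $\pi Q_p=\pi$, $\sum_s\pi(s)=1$; Cramer's rule expresses $\pi_p$ as a rational function of the entries of $Q_p$, hence as a real-analytic function of $p$ on $\M(B)$.

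\smallskip
\emph{Step 3 (Ergodic formulae).} Let $\Delta(s,y):=|wy|-|w|\in\Z$ for any reduced $w$ with length-$R$ suffix $s$; the free-group cancellation rule ensures this is well defined and bounded on $\Si_0\times B$. The ergodic theorem for the Markov pair process $(s_n,y_{n+1})$ yields
\[
\ell_p\;=\;\sum_{s\in\Si_0,\,y\in B}\pi_p(s)\,p(y)\,\Delta(s,y),
\]
manifestly analytic in $p$. A parallel coding argument identifies $h_p$ with a finite sum of terms of the shape $\pi_p(s)\,p(y)\,\ln\alpha_p(s,y)$, where $\alpha_p(s,y)$ is a rational function of $p$ built from the $Q_p$-entries (essentially the probability that a step from suffix $s$ reaches the same next suffix as $y$ would); hence $h_p$ is real-analytic as well.

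\smallskip
\emph{Main obstacle.} Steps 1, 2 and the drift formula are essentially routine given the free-group structure; the serious point is the entropy identification in Step 3. One must match the asymptotic entropy of $(X_n)$ on $\F_d$ with a Shannon-type entropy rate attached to the coded Markov process, i.e.\ show that the trajectory $(X_n)$ is, up to a bounded-length initial stem, a deterministic function of the suffix-chain trajectory together with the increment sequence. This relies on the non-cancellation of reduced words past the suffix seam, combined with almost-sure positivity of the drift $\ell_p$ (a standard consequence of non-amenability of $\F_d$ and non-degeneracy of $p$). Once this identification is in place, the analyticity of $h_p$ reduces, exactly as for $\ell_p$, to the analyticity of $\pi_p$ established in Step 2.
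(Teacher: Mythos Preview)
The proposal has a genuine error at the very first step. The length-$R$ suffix of $X_n$ is \emph{not} a Markov chain, because a single step can cancel letters and expose a suffix that lies strictly to the left of the previous length-$R$ window. Concretely, take $F=\F_2$ with generators $a,b$, let $b^{-1}\in B$, and set $R=2$: the words $w=bab$ and $w'=aab$ share the suffix $ab$, yet $wb^{-1}=ba$ and $w'b^{-1}=aa$ have different length-$2$ suffixes. No choice of $R$ repairs this, since whenever more than half of an increment $y$ cancels into $w$, the new length-$R$ suffix reaches past the old one. Thus the matrix $Q_p$ is not the transition kernel of the suffix process, Step~2 has no object to act on, and the drift formula of Step~3 --- which needs the limiting suffix law to be the stationary law of a finite chain whose entries are \emph{linear} in $p$ --- is unfounded. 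In fact the limiting distribution of the last few letters of $X_n$ is governed by the harmonic measure $\check p^{\,\infty}$ of the reflected walk; this is exactly the kind of object whose analytic dependence on $p$ the paper labours to establish, and it is \emph{not} a rational function of $p$ once $B$ is larger than the natural generating set (cf.\ the remark following Theorem~\ref{main}).

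The entropy identification you flag as the ``main obstacle'' is not a technicality either. Knowing the full increment sequence $(\omega_n)$ already determines $(X_n)$, so your proposed coding carries entropy rate $H(p)$, which strictly exceeds $h_p$ on a free group; there is no finite-state recoding whose Shannon rate equals $h_p$. The correct expression is the Furstenberg formula $h_p=-\sum_{x}p(x)\int_{\partial F}\ln K_\xi(x^{-1})\,dp^\infty(\xi)$, and the Martin kernels $K_\xi$ are the crux. The paper's route --- Derriennic's barrier representation of $K_\xi$ as a limit of products of finitely many matrices $A_j(p)$, Birkhoff and Rugh--Dubois cone contraction to obtain H\"older regularity together with a complex-analytic extension of $p\mapsto\Phi_p$, and then transfer-operator perturbation theory to propagate analyticity to $p^\infty$ --- is precisely what replaces the finite Markov structure that your argument presupposes but which does not exist here.
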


Continuity of the entropy and of the linear drift is known for probabilities with first moment on a Gromov-hyperbolic group (\cite{EK}). Also in the case when $B$ is a set of free generators, there are formulas for the entropy  and the linear drift which show that they are real analytic functions of the directing probability (see \cite{De2} or imbed \cite{DM} in the formulas (\ref{entropy}) and (\ref{drift}) below). Similar formulas have been found for braid groups (\cite{M}) and free products of finite groups or graphs (\cite{MM}, \cite{G1}, \cite{G2}), but as soon as the set $B$ is not reduced to the natural generating set, there is no direct formula for $h_p $ or $\ell _p $ in terms of $p$.

The ratio $h_p/ \ell_p $ has a geometric interpretation as the Hausdorff dimension $D_p$ of the unique stationary measure for the action of $F$ on the space $\partial F$ of infinite reduced words.  It follows from Theorem \ref{main} that this dimension $D_p$ is also real analytic  in $p$, see below Corollary \ref{dim} for a  more precise statement.   Ruelle (\cite{R3}) proved that the Hausdorff dimension of the Julia  set of a rational function, as long as it is hyperbolic,  depends real analytically of the parameters and our approach is inspired by \cite{R3}. We first  review properties of the random walk on $F$ directed by a probability $p$. In particular, we can  express $h_p $ and $\ell _p$  in terms of the  exit measure $p^ \infty $ of the random walk on the boundary $\partial F$ (see \cite{Le} and section \ref{rw} for background and notation). We then express this exit measure using thermodynamical formalism: if one views $\partial F$ as a one-sided subshift of finite type, the exit measure $p^\infty $ is the isolated eigenvector of maximal eigenvalue for a dual transfer operator $\LL _p^\ast$ involving the Martin kernel of the random walk. Finally, from the description of the Martin kernel by Derriennic (\cite{De1}), we prove that the mapping $p \mapsto \LL_p $ is real analytic. The proof uses contractions in projective metric  on complex cones (\cite{Ru}, \cite {Du}) and I  want to thank Lo\"ic Dubois for useful comments. Regularity  of $p \mapsto p^\infty $  and Theorem \ref{main} follow. 

\ 

Our argument may apply to other similar settings. For instance, let $\pi : \F_d \to SO(k,1)$ be a faithful Schottky representation of the free group $\F_d$ as a convex cocompact group of $SO(k,1)$. Namely, $SO(k,1) $ is considered as a group of isometries of the hyperbolic space $\H^k$ and there are $2d$ disjoint open halfspaces $H_a$ associated to the generators and their inverses in such a way that $\pi (a) $ sends the complement of $H_{a^{-1}}$ onto the closure of $H_a$ in $\H^k$.
Then, another natural asymptotic quantity is the Lyapunov exponent $\g_p$:
$$
\g_p \; := \;  \lim_n \frac{1}{n} \sum _{x\in F}  p^{(n)}(x) \ln \|\pi (x) \|,
$$
where $\|\cdot \| $ is  some norm on matrices. Then,
\begin{theo}\label{expo} Assume $\F_d$ is represented as a convex cocompact subgroup of $SO(k,1)$ as above, and $B$ is  a finite subset $B \subset F$ such that $\cup _n B^n = F$. Then the function $p \mapsto \g_p $ is a real analytic function on $\M(B)$.
\end{theo}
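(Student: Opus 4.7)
The plan is to adapt the strategy used for Theorem \ref{main}. Since any matrix norm on $SO(k,1)$ satisfies $\log \|\pi (x)\| = d_{\H ^k}(o, \pi (x) o) + O(1)$ uniformly in $x \in \F _d$ (for any fixed basepoint $o \in \H ^k$), the Lyapunov exponent coincides with the linear drift of the projected random walk on $\H ^k$:
\[
\g _p \; = \; \lim _n \frac{1}{n} \sum _{x \in F} p^{(n)}(x) \, d_{\H ^k}(o, \pi (x) o).
\]
Faithfulness and convex cocompactness of $\pi $ make the orbit map $x \mapsto \pi (x) o$ a quasi-isometric embedding of the Cayley graph of $\F _d $ into $\H ^k$, extending to a $\pi $-equivariant H\"older homeomorphism $\Pi : \partial \F _d \to \Lambda (\pi (\F _d)) \subset \partial \H ^k$ onto the limit set.

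The central step is a Furstenberg-type integral representation of $\g _p$. Writing $\beta _\eta (y, z) := \lim _{w \to \eta } (d(y, w) - d(z, w))$ for the Busemann cocycle of $\H ^k$, a standard cocycle-plus-Birkhoff argument on the path space $(B^{\N}, p^{\otimes \N})$, using that $\pi (X_n ) o \to \Pi (Z_\infty )$ where $Z_\infty $ has law $p^\infty $, yields
\[
\g _p \; = \; \sum _{x \in B} p(x) \int _{\partial \F _d } c(x, \xi ) \, dp^\infty (\xi ),
\]
where $c(x, \xi )$ is a Busemann increment of the form $\pm \beta _{\Pi (\xi )}(o, \pi (x)^{\pm 1} o)$ (the precise sign and inverse depending on the left- or right-convolution convention). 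The essential features are that $c$ does not depend on $p$ and that, for each $x \in B$, the function $\xi \mapsto c(x, \xi )$ is H\"older continuous on $\partial \F _d$: the Busemann function is smooth on $\H ^k \times \H ^k \times \partial \H ^k$ and $\Pi $ is H\"older.

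Given this representation, analyticity is immediate from what was established in the proof of Theorem \ref{main}, which exhibits $p^\infty $ as the leading (dual) eigenvector of a transfer operator $\LL _p^\ast $ on the dual of a Banach space of H\"older functions on $\partial \F _d$ and shows that $p \mapsto \LL _p$, hence $p \mapsto p^\infty $, is real analytic with values in that space. Consequently each integral $\int c(x, \xi )\, dp^\infty (\xi )$ depends real analytically on $p$, and $\g _p$ is a finite sum of products of such integrals with the coordinate functions $p \mapsto p(x)$, whence real analytic on $\M (B)$. The main obstacle is the derivation of the boundary formula itself: it requires the H\"older regularity of $\Pi $ (from convex cocompactness) to place $c$ in the function space on which $p^\infty $ acts analytically, and the standard ergodic argument on path space to justify the Furstenberg identity, with uniform boundedness of the one-step cocycle on the finite set $B$ providing integrability and the passage to the limit.
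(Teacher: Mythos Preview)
Your proposal is correct and follows essentially the same route as the paper: express $\gamma_p$ via the hyperbolic drift $d_{\H^k}(o,\pi(x)o)$, derive a Furstenberg--Busemann integral formula against $p^\infty$ using the H\"older boundary map $\partial\F_d\to\Lambda$, and then invoke the analyticity of $p\mapsto p^\infty$ established in the proof of Theorem~\ref{main}. The only cosmetic discrepancy is the constant relating $\ln\|\pi(x)\|$ to $d_{\H^k}(o,\pi(x)o)$ (the paper carries a factor $\tfrac12$), which is immaterial for analyticity.
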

Analyticity of the exponent of an independent random product of matrices is known for {\it { positive }} matrices  (\cite{R2}, \cite{P}, \cite{H}). Here we show it for matrices in some {\it {discrete }} subgroup. It is possible that our approach yield similar results for more general discrete subgroups of $SO(k,1)$ or even for all Gromov-hyperbolic groups. 

 \

In  the note, the letter $C$ stands for a real number independent of the other variables, but which may vary from line to line. In the same way, the letter  $ \O_p$ stands for a neighborhood of  $p \in \M(B)$ in $\C^B$ which may vary from line to line. 

\section{Convolutions of $p$}\label{rw}

We recall in this section the properties of the convolutions $p^{(n)}$ of a finitely supported probability measure $p$ on the free group  $\F_d = F$. We follow the notation from \cite{Le}. Any element of $F$ has a unique reduced word representation in generators $\{a_1, \cdots, a_d, a_{-1}, \cdots,  a_{-d} \}$. Set $\d (x,x) = 0 $ and, for $x \not = x'$, $\d (x,x') = \exp -(x\wedge x')$, where $(x\wedge x')$ is the number of common letters at the beginning of the reduced word representations of $x$ and $x'$. Then 
$\d$ defines a metric on $F$ and extends to the completion $F \cup \partial F$ with respect to $\d$. The boundary $\partial F$ is a compact space which can be represented as the space of infinite reduced words. Then the distance between two distinct infinite reduced words $\xi $ and $\xi'$ is given by 
$$ \d (\xi, \xi' ) \;= \; \exp - (\xi  \wedge \xi '),$$
where $(\xi \wedge \xi') $ is the length of the initial common part of $\xi $ and $\xi '$.

There is a natural continuous action of $F$ over $\partial F$ which extends the left action of $F$ on itself: one concatenates the reduced word representation of $x \in F$ at the beginning of the infinite word $\xi$ and one obtains a reduced word by making  the necessary reductions.  A probability measure $\mu $ on $\partial F$ is called stationary if it satisfies $$ \mu \; = \; \sum _{x \in F} p(x) x_\ast \mu .$$
There is a unique stationary probability measure on $\partial F$, denoted $p^\infty $ and the entropy $h_p$ and the linear drift $\ell _p$ are given by the following formulae:
\begin{equation}\label{entropy}
h_p \; = \; - \sum _{x \in F} \left( \int _{\partial F}\ln  \frac {dx^{-1}_\ast p^\infty }{dp^\infty } (\xi) dp^\infty (\xi)\right) p(x) ,\end{equation}
\begin{equation}\label{drift}
\ell_p \; = \; \sum _{x \in F} \left( \int _{\partial F}\th _\xi (x^{-1}) dp^\infty (\xi)\right) p(x) ,\end{equation}
where $\th_\xi (x) = |x| - 2(\xi \wedge x) = \lim _{y \to \xi} (|x^{-1}y | - |y|) $ is the Busemann function.

\

Observe that in both expressions, the sum is a finite sum over $ x \in B$. In the case of a  finitely supported random walk on a general group, formula (\ref{entropy})
holds, but with $(\partial F, p^\infty )$ replaced by the Poisson boundary of the random walk (see \cite{F}, \cite {K}); formula (\ref{drift}) also 
holds, but with $(\partial F, p^\infty )$ replaced by some stationary measure on the Busemann boundary of the group (\cite{KL}). 

\

Recall that in the case of the free group the Hausdorff dimension of the measure $p^\infty $ on $(\partial F, \d)$ is given by $h_p / \ell _p$ (\cite{Le}, Theorem 4.15). So we have the following Corollary of Theorem \ref{main}:
\begin{cor}\label{dim} Assume $F = \F_d$ is the free group with $d$ generators, $B$ is a finite subset of $F$  such that $\cup _n B^n = F$.  Then, with the above notation, the Hausdorff dimension of the stationary measure on $(\partial F, \d ) $ is a real analytic  function of  $p$ in $\M(B)$. \end{cor}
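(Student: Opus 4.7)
The plan is to deduce Corollary \ref{dim} almost immediately from Theorem \ref{main} together with the dimension formula $D_p = h_p/\ell_p$ cited from \cite{Le}, Theorem 4.15. Since the ratio of two real analytic functions on a common open set is real analytic wherever the denominator does not vanish, the whole content of the argument reduces to three ingredients: the dimension formula, the analyticity of $h_p$ and $\ell_p$, and the strict positivity of $\ell_p$ on $\M(B)$.

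First I would quote the identity $D_p = h_p/\ell_p$, which is valid precisely in the free group setting considered here (the boundary carries the visual metric $\d$ built from the word length, so the Hausdorff dimension of any $F$-quasi-invariant measure controlled by the random walk is the entropy-to-drift ratio). Second, I would invoke Theorem \ref{main} to state that both $p \mapsto h_p$ and $p \mapsto \ell_p$ are real analytic on $\M(B)$. Third, I would verify that $\ell_p > 0 $ for every $p \in \M(B)$; this is where the only nontrivial work lies, although it is already classical.

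For the positivity of $\ell_p$, the hypothesis $\cup_n B^n = F$ means $p$ is non-degenerate in the sense that the support of $p$ generates $F$ as a semigroup (after symmetrization, as a group). Since $\F_d$ is non-amenable and $p$ is finitely supported, Guivarc'h's inequality $\ell_p \geq h_p / v$, where $v$ is the exponential growth rate of $F$, together with the positivity of $h_p$ for non-degenerate finitely supported measures on a non-amenable group (by Kaimanovich--Vershik, or simply by Kesten's criterion applied via the transience of the walk), yields $\ell_p > 0$. Alternatively one can observe that the exit measure $p^\infty$ has no atoms on $\partial F$ and deduce $\ell_p > 0$ directly from \eqref{drift} using the Busemann cocycle, since on a negatively curved setup a non-atomic stationary measure forces positive drift.

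Once these three points are in place, the proof concludes in one line: $D_p = h_p/\ell_p$ is a quotient of two real analytic functions on $\M(B)$ with nowhere-vanishing denominator, hence real analytic. The only potential obstacle is if one insisted on an independent proof of $\ell_p > 0$ that does not quote Kaimanovich--Vershik or Guivarc'h, but since we are working in the free group these positivity statements are standard and can simply be cited. Thus the corollary is a routine consequence of Theorem \ref{main} and the dimension formula of \cite{Le}.
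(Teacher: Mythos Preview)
Your proposal is correct and follows exactly the paper's approach: the corollary is stated as an immediate consequence of Theorem~\ref{main} and the dimension formula $D_p = h_p/\ell_p$ from \cite{Le}, Theorem~4.15. The paper does not spell out the positivity of $\ell_p$ that you include, but your added justification is standard and only makes the argument more complete.
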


\

The Green function $G(x)$ associated to $(F,p) $ is defined by $$ G(x) \;  = \; \sum _{n=0}^\infty p^{(n)}(x)$$ 
(see Proposition \ref{decay} below for the convergence of the series).
For $y \in F$, the Martin kernel $K_y$ is defined by $$ K_y (x) = \frac {G(x^{-1}y)}{G(y)}.$$
Derriennic (\cite{De1}) showed that $y_n \to \xi \in \partial F$ if, and only if, the Martin kernels $K_{y_n} $ converge towards a function $K_\xi $ called the Martin kernel at $\xi$. We have (see e.g. \cite{Le} (3.11)): 
\begin{equation*}  \frac{dx_\ast p^\infty }{dp^\infty }(\xi) \; = \; K_\xi (x). \end{equation*}

\section{Random walk on $F$}\label{rw2}

The quantities introduced in Section \ref{rw} can be associated with the trajectories of a random walk on $F$. In this section, we recall the corresponding notation and properties. Let $\Om = F^\N $ be the space of sequences of elements of $F$, $M$ the product probability $p^\N $. The random walk is described by the probability $\P$
on the space of paths $\Om $, the image of $M$ by the mapping:
$$ (\om _n )_{n \in \Z} \mapsto (X_n )_{n \geq 0 }, {\textrm{ where }} X_0 = e {\textrm{ and }} X_n = X_{n-1}\om _n {\textrm{ for }} n>0.$$
In particular, the distribution of $X_n$ is the convolution $p^{(n)}$. The notation $p^\infty $ reflects the following 
\begin{theo}[Furstenberg, see \cite{Le}, Theorem 1.12] There is a mapping $X_\infty : \Om \to \partial F$ such that for $M$-a.e. $\om $, 
$$ \lim _n X_n  (\om ) \; = \; X_\infty (\om ).$$
The image measure $p^\infty $ is the only stationary probability measure on $\partial F$.
\end{theo}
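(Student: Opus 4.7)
I would prove the two assertions (almost sure convergence of $X_n$ to a boundary point, and uniqueness of the stationary measure) in sequence, using transience of the walk as the main analytic input.

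\textbf{Step 1: Transience and escape to infinity.} Since $p$ is finitely supported and $\bigcup_n B^n = F$, for $d\geq 2$ the group $\F_d$ is non-amenable, so by Kesten's criterion the walk is transient; the case $d=1$ is handled by the classical Chung--Fuchs dichotomy. From transience I would deduce that $|X_n(\omega)|\to\infty$ for $M$-a.e. $\omega$, since each element is visited only finitely often. To upgrade this to convergence in $F\cup\partial F$, I would exploit the tree structure of the Cayley graph: for any $k\geq 1$, the set of vertices whose reduced word has a given prefix of length $k$ forms a subtree, whose boundary with the rest of $F$ is the single vertex equal to that prefix. Transience implies that this boundary vertex is visited finitely many times, so once $X_n$ has entered the subtree for the last time its first $k$ letters remain constant. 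Letting $k\to\infty$ defines $X_\infty(\omega)\in\partial F$ as the infinite reduced word obtained by concatenating the eventually stable prefixes.

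\textbf{Step 2: Stationarity of the image measure.} Write $X_n=X_1\cdot(X_1^{-1}X_n)$. Conditionally on $X_1=x$, the sequence $(X_1^{-1}X_n)_{n\geq 1}$ is again a $p$-random walk started at $e$ and independent of $X_1$. Passing to the limit gives $X_\infty=X_1\cdot Y$ where $Y$ has the same law as $X_\infty$ and is independent of $X_1$. Testing against $f\in C(\partial F)$ yields
\[
\int f\,dp^\infty \;=\; \sum_{x\in F} p(x)\int f(x\xi)\,dp^\infty(\xi),
\]
so $p^\infty = \sum_x p(x)\,x_\ast p^\infty$.

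\textbf{Step 3: Uniqueness.} Let $\mu$ be any stationary probability on $\partial F$. For $f\in C(\partial F)$, stationarity of $\mu$ ensures that
\[
M_n(\omega)\;:=\;\int_{\partial F} f(X_n(\omega)\xi)\,d\mu(\xi)
\]
is a bounded martingale for the filtration generated by $(X_1,\ldots,X_n)$, hence converges $M$-a.s. Now on the full-measure set where $X_n\to X_\infty\in\partial F$, and for $\xi\in\partial F$ with $\xi\neq X_\infty^{-1}$-type cancellation patterns, the translates $X_n\xi$ converge to $X_\infty$: indeed, once the prefix of length $k$ of $X_n$ is stabilized and $|X_n|$ exceeds the length of the cancellation block between $X_n$ and $\xi$, the first $k$ letters of $X_n\xi$ match those of $X_\infty$. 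The set of problematic $\xi$ is a single boundary point (or countably many), so $\mu$-a.e. $\xi$ works provided $\mu$ is non-atomic; non-atomicity follows because any atom of $\mu$ at $\xi_0$ would force $\xi_0$ to be fixed by the support of $p$, contradicting non-degeneracy of $p$ on a free group. Dominated convergence then gives $M_\infty=f(X_\infty)$, and taking $M$-expectation yields $\int f\,d\mu=\mathbb{E}[f(X_\infty)]=\int f\,dp^\infty$, whence $\mu=p^\infty$.

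\textbf{Main obstacle.} The routine parts are Steps 1 and 2; the real work is in Step 3, specifically showing that the cancellation set is $\mu$-null. If one does not have non-atomicity of $\mu$ a priori, the cleanest workaround is to argue at the level of the filtration directly: the tail $\sigma$-algebra of the walk is trivial (a consequence of the Hewitt--Savage $0$--$1$ law applied to the i.i.d. increments), so every bounded harmonic function on $F$ (such as $x\mapsto\int f(x\xi)\,d\mu(\xi)$) is determined by its radial boundary values, which forces $\mu=p^\infty$.
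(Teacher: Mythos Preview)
The paper does not give a proof of this statement at all; it is quoted as background (attributed to Furstenberg, with a pointer to \cite{Le}, Theorem~1.12) and used without argument. So there is no proof in the paper to compare against, and the question is simply whether your argument stands on its own.

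Steps~1 and~2 are the standard ones and are fine (modulo the side remark that for $d=1$ the walk may be recurrent, so the theorem as stated tacitly assumes $d\ge 2$; Chung--Fuchs does not save you without a drift hypothesis). Your non-atomicity argument in Step~3 is also correct: a maximal atom propagates along the whole $F$-orbit, which is infinite.

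The real gap is the pointwise claim in Step~3 that ``the set of problematic $\xi$ is a single boundary point (or countably many).'' That is not justified and is in general false: the exceptional set for the convergence $X_n\xi\to X_\infty$ is governed by the accumulation behaviour of $X_n^{-1}$ in $\partial F$, and for a typical trajectory this set need not be small (for nearest-neighbour walk, the first few letters of $X_n^{-1}$ are determined by the last few increments, which are i.i.d., so every cylinder is hit infinitely often). What does work is an \emph{in-measure} version: for $n$ large enough that $X_n$ shares a length-$m$ prefix with $X_\infty$, the set $\{\xi:(X_n\xi\wedge X_\infty)<m\}$ is contained in a single cylinder of length $|X_n|-m+1$ (namely the one prescribed by the inverse of the last $|X_n|-m+1$ letters of $X_n$). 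For non-atomic $\mu$, the $\mu$-mass of any sequence of cylinders with lengths tending to infinity goes to zero (otherwise a pigeonhole/diagonal argument produces a nested subsequence shrinking to an atom). Hence $(X_n)_\ast\mu\to\delta_{X_\infty}$ weakly, $M_n\to f(X_\infty)$ a.s., and the rest of your argument goes through.

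Your proposed fallback is incorrect. The tail (or invariant) $\sigma$-field of the walk $(X_n)$ on $\F_d$, $d\ge 2$, is \emph{not} trivial: its non-triviality is precisely the non-triviality of the Poisson boundary $(\partial F,p^\infty)$, and there are plenty of non-constant bounded harmonic functions. The Hewitt--Savage $0$--$1$ law concerns the exchangeable $\sigma$-field of the i.i.d.\ increments $(\omega_n)$, which is a different object; it does not imply triviality of the tail of $(X_n)$ on a non-abelian group.
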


\

For $x,y \in F$, let $u(x,y)$ be  the  probability  of eventually reaching $y$ when starting from $x$. By left invariance, $u(x,y ) = u (e, x^{-1}y) $. Moreover, by the strong Markov property, $G(x) = u(e,x) G(e)$ so that we have:
\begin{equation}\label{Martin} K_y(x) = \frac {u(x,y)}{u(e,y)}. \end{equation}
By definition, we have $0< u(x,y) \leq 1$.  The number $u(x,y) $ is given by the sum of the probabilities of the paths going from $x$ to $y$ which do not visit $y$ before arriving at $y$. 
 \begin{prop}\label{decay} Let $p \in \M(B)$. There are  numbers $C$ and  $\zeta, 0 < \zeta < 1$ and a  neighborhood $\O_p$ of $p$ in $\C^B$ such that for all $q \in \O_p$, all $x \in F$ and all $n \geq 0$, 
$$| q|^{(n)} (x)\leq C \zeta ^n.$$ \end{prop}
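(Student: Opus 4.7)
The plan is to bound $|q|^{(n)}(x)$ uniformly through the operator norm on $\ell^2(F)$ of convolution by $|q|$, exploiting the nonamenability of $F=\F_d$ (with $d\ge 2$) via Kesten's theorem, and then passing from $p$ to a complex neighborhood by operator-norm continuity.

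For a finitely supported measure $\mu$ on $F$, let $T_\mu\colon \ell^2(F)\to \ell^2(F)$ be left convolution, $T_\mu f(x)=\sum_{y\in F}\mu(y)\, f(y^{-1}x)$. Then $T_\mu T_\nu=T_{\mu\ast \nu}$, $T_\mu^n\delta_e=\mu^{(n)}$, and since each $T_{\delta_y}$ is an isometry the triangle inequality yields the Lipschitz bound $\|T_\mu - T_\nu\|_{\mathrm{op}}\le \sum_{z\in F}|\mu(z)-\nu(z)|$.

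The key step is the spectral gap $\|T_p\|_{\mathrm{op}}<1$. Since $\|T_p\|^2=\|T_p^\ast T_p\|=\|T_{\check p\ast p}\|$, one is reduced to the symmetric probability $\check p\ast p$, whose support lies in the subgroup $H:=\langle B^{-1}B\rangle\le F$. By Nielsen--Schreier, $H$ is free; I claim its rank is at least two. Otherwise $H=\langle g\rangle$; fixing $b_0\in B$ forces $B\subset b_0\langle g\rangle\subset \langle b_0,g\rangle$. If $\langle b_0,g\rangle$ is a proper subgroup of $F$, then $\cup_n B^n\subsetneq F$, a contradiction. If instead $\langle b_0,g\rangle=F$, then $b_0$ and $g$ freely generate $F$ (forcing $d=2$), and every product in $B^n$, being a reduced word in $b_0$ and powers of $g$ with no occurrence of $b_0^{-1}$, has total $b_0$-exponent $n\ge 1$ and hence cannot equal $g$; again a contradiction. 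Thus $H$ is nonamenable, and Kesten's theorem in $\ell^2(H)$, lifted to $\ell^2(F)$ through the coset decomposition, gives $\|T_{\check p\ast p}\|<1$, so $\rho_p:=\|T_p\|<1$.

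For the perturbation step, $q\mapsto |q|$ is continuous from $\C^B$ to nonnegative measures supported on $B$, with $|p|=p$; composing with the Lipschitz map $\mu\mapsto\|T_\mu\|$ yields a continuous function of $q$ equal to $\rho_p<1$ at $q=p$. Fixing $\zeta\in(\rho_p,1)$, take a neighborhood $\O_p\subset \C^B$ of $p$ on which $\|T_{|q|}\|\le\zeta$. For $q\in\O_p$, any $x\in F$ and $n\ge 0$,
\[
|q|^{(n)}(x)=(T_{|q|}^{n}\delta_e)(x)\le \|T_{|q|}^{n}\delta_e\|_{\ell^2}\le \|T_{|q|}\|^n\le \zeta^n,
\]
which proves the proposition with $C=1$. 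The main obstacle is the spectral gap $\|T_p\|<1$; within that, the delicate point is showing $\langle B^{-1}B\rangle$ is nonamenable, which is where the hypothesis $\cup_n B^n=F$ together with $d\ge 2$ is essential.
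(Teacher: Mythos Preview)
Your proof is correct and follows the same overall strategy as the paper: bound $|q|^{(n)}(x)$ by the $\ell^2$-operator norm of convolution by $|q|$, use nonamenability to get this norm below $1$, then perturb. The difference lies in how the spectral bound is obtained. The paper invokes Derriennic--Guivarc'h [DG] to get that the \emph{spectral radius} of $P_p$ is strictly less than $1$, and then must pass to a high power $n_0$ so that $\|P_p^{n_0}\|_2<1$ before perturbing; this is why a constant $C$ (coming from the first $n_0$ steps) appears. You instead prove the sharper statement $\|T_p\|_{\mathrm{op}}<1$ directly, via the $C^\ast$-identity $\|T_p\|^2=\|T_{\check p\ast p}\|$ and Kesten's theorem applied to the symmetric measure $\check p\ast p$ on the subgroup $H=\langle B^{-1}B\rangle$; your argument that $H$ has rank $\ge 2$ (using the exponent-sum homomorphism in the free basis $\{b_0,g\}$) is a nice self-contained replacement for the citation. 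The payoff is that you avoid the power trick entirely and obtain the estimate with $C=1$. Both arguments implicitly need $d\ge 2$; you make this explicit, while the paper hides it in the reference to [DG].
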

\begin{proof} Let $q \in \C^B$. Consider the convolution operator $P_q$ in $\ell _2 (F, \R)$ defined by:
$$ P_qf(x) = \sum _{y\in F} f(xy^{-1}) |q|(y).$$
 Derriennic  and Guivarc'h (\cite {DG}) showed that for $p \in \M(B)$,  $P_p$ has spectral radius  smaller than one. In particular, there exists $n_0$ such that the operator norm of $P_p^{n_0} $ in $\ell _2(F)$ is smaller than one. Since $B$ and $B^{n_0}$ are finite, there is a neighborhood $\O_p$ of $p$ in $\C^B$ such that for all $q \in \O_p$, $\| P_q^{n_0} \|_2 < \l $ for some $\l <1$ and $\| P_q^k \|_2 \leq C$ for $1\leq k \leq n_0.$
 It follows that for all $q \in \O_p $, all $n \geq 0$,
 $$ \|P_q^n \|_2 \; \leq \; C\l ^{[n/n_0]} .$$
In particular, for all $x \in F$, $|q|^{(n)} (x) = |[P_q^n \d_e](x)|\leq |P_q^n \d _e|_2  \leq  C \l ^{[n/n_0]} | \d_e |_2 \leq  C\l ^{[n/n_0]}.$
\end{proof}

\

Fix $p \in \M(B)$. For $x\in F$, $V$ a finite subset of $F$ and $v\in V$, let $\a_x^V(v)$ be the probability that the first visit in $V$ of the random walk starting from $x$ occurs  at $v$. We have $0 < \sum _{v\in V} \a_x^V (v) \leq 1$ and
\begin{prop}\label{cont} Fix $x,V$ and $v$. The mapping $p \mapsto \a_x^V(v) $ extends to an analytic function on a neighborhood of $\M (B)$ in $\C^B$. \end{prop}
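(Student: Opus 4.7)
My plan is a first-passage expansion of $\alpha_x^V(v)$ as a series of polynomials in $q \in \C^B$, with uniform convergence guaranteed by Proposition \ref{decay}.

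First I would dispose of the degenerate cases: if $x \in V$ the value is $\mathbf{1}_{v = x}$, and if $v \notin V$ the value is $0$, both constant in $p$. So assume $x \notin V$ and $v \in V$. Decomposing the first-passage event according to the length $n$ of the trajectory before hitting $V$, I would write $\alpha_x^V(v) = \sum_{n \ge 1} A_n(p)$, where
\begin{equation*}
A_n(q) \; := \; \sum_{\substack{(\omega_1,\ldots,\omega_n) \in B^n \\ x\omega_1\cdots\omega_n = v \\ x\omega_1\cdots\omega_k \notin V \text{ for } 1 \le k \le n-1}} \prod_{i=1}^n q(\omega_i).
\end{equation*}
Each $A_n$ is a polynomial in the $|B|$ complex coordinates of $q$, hence an entire function on $\C^B$.

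The next step is to establish uniform convergence of $\sum_n A_n(q)$ on a complex neighborhood of $p$. Dropping the avoidance constraint only enlarges the sum of moduli, so
\begin{equation*}
|A_n(q)| \; \le \; \sum_{\substack{(\omega_1,\ldots,\omega_n) \in B^n \\ x\omega_1\cdots\omega_n = v}} \prod_{i=1}^n |q(\omega_i)| \; = \; |q|^{(n)}(x^{-1}v).
\end{equation*}
Proposition \ref{decay} furnishes $C < \infty$, $\zeta \in (0,1)$, and a neighborhood $\O_p$ of $p$ in $\C^B$ such that $|q|^{(n)}(x^{-1}v) \le C\zeta^n$ for every $q \in \O_p$ and every $n \ge 0$. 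Hence $\sum_n A_n(q)$ converges normally on $\O_p$ and, as a uniform limit of polynomials, defines an analytic function on $\O_p$ that agrees with $\alpha_x^V(v)$ on $\O_p \cap \M(B)$.

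No serious obstacle appears. The only substantive ingredient is the observation that the formal first-passage expansion, which is manifestly a probability on the real locus $\M(B)$, continues to converge absolutely and uniformly in a complex neighborhood of $p$; this is exactly the content of Proposition \ref{decay}, and ultimately of the Derriennic--Guivarc'h spectral-radius bound on $P_p$.
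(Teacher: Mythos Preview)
Your proof is correct and follows essentially the same route as the paper: expand $\alpha_x^V(v)$ as the series $\sum_n \alpha_x^{n,V}(v)$ of first-entry-at-time-$n$ polynomials, bound each term by $|q|^{(n)}(x^{-1}v)$, and invoke Proposition~\ref{decay} to get uniform geometric convergence on a complex neighborhood $\O_p$. The only differences are cosmetic (you handle the degenerate cases $x\in V$, $v\notin V$ explicitly and spell out the domination $|A_n(q)|\le |q|^{(n)}(x^{-1}v)$), and the paper additionally remarks that the union $\O=\cup_p\O_p$ gives a neighborhood of all of $\M(B)$.
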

\begin{proof} The number $\a_x^V(v)$ can be written as the sum of the probabilities $\a _x ^{n,V} (v) $ of entering  $V$ at $v$ in exactly $n$ steps. The function $p \mapsto \a _x ^{n,V} (v) $ is a  polynomial of degree $n$ on $\M (B)$:
$$  \a _x ^{n,V} (v)  \; = \; \sum _\EE q_{i_1}q_{i_2}\cdots q_{i_n}, $$
where $\EE $ is the set of paths $\{ x, xi_1, xi_1i_2, \cdots , xi_1i_2 \cdots i_n = v \}$ of length $n$ made of steps in $B$ which start from $x$ and enter $V$ in $v$.  By Proposition \ref{decay}, there is a neighbourhood $\O_p$ of $p$ in $\M(B)$ and numbers $C$, $\zeta, 0 < \zeta < 1$, such that for $q \in \O_p$ and for all $y \in F$,
$$ |q|^{(n)} (y)  \; \leq \; C \zeta ^n . $$
It follows that for $q \in \O_p$, $$ | \a _x ^{n,V} (v) |  \leq  C |q|^{(n)} (x^{-1}v) \leq   C \zeta ^n.$$
Therefore,  $q \mapsto \a_x^V(v)$ is given locally by a uniformly converging series of polynomials, it is an analytic  function on $\O := \cup _p \O_p$. \end{proof}

\

\section{Barriers and H\"older property of the Martin kernel}

Set  $r = \max \{ |x|; x \in B\}$. A set $V$ is called a barrier between $x$ and $y$ if $\d (x,y) > r$ and if there exist two points $z $ and $z'$ of the geodesic between $x$ and $y$, distinct from $x$ and $y$ such that $\d (z,z') = r-1 $ and $V$ is the intersection of the two balls of radius $r-1$ centered at $z$ and at $z'$. The basic geometric Lemma is the following:
\begin{lem}[\cite{De1}, Lemme 1]\label{barrier} If $x$ and $y$ admit a barrier $V$, then every trajectory of the random walk starting from $x$ and reaching $y$ has to visit $V$ before arriving at $y$. \end{lem}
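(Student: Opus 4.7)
The plan is to exploit the fact that the Cayley graph of $\F_d$ is a tree, so each $p \in F$ has a unique closest-point projection $\pi(p)$ onto the geodesic $[x,y]$. Write $t(p)$ for the distance from $x$ to $\pi(p)$ along the geodesic, and $h(p)$ for the off-geodesic distance $|p^{-1}\pi(p)|$. Labelling the barrier segment as $v_0=z,v_1,\ldots,v_{r-1}=z'$ with $t(v_j)=s+j$ for $s=|x^{-1}z|$, a direct calculation in the tree gives: $p \in V$ if and only if $\pi(p)=v_j$ for some $j \in \{0,\ldots,r-1\}$ and $h(p) \leq \min(j,\, r-1-j)$. In particular, neither $x$ nor $y$ lies in $V$.

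The idea is to split the segment at its midpoint and classify the trajectory points by their projection into four types: $L$ (projection strictly before $z$), $C_1$ (projection in $\{v_0,\ldots,v_{\lfloor (r-1)/2\rfloor}\}$), $C_2$ (projection in the rest of the segment), $R$ (projection strictly past $z'$). The characterisation of $V$ yields the \emph{thickness away from $V$} estimates $h(p) \geq j+1$ for $p \in C_1 \setminus V$, and $h(p) \geq r-j$ for $p \in C_2 \setminus V$, where $j$ is the projection index.

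The central step is to show that no single step of the walk can cross from $L \cup (C_1 \setminus V)$ directly to $(C_2 \setminus V) \cup R$. In any such hypothetical crossing the two projections are distinct points of $[x,y]$, so in the tree the step length equals $h(p_i) + (t(p_{i+1}) - t(p_i)) + h(p_{i+1})$. Plugging in the thickness inequalities together with the mandatory parameter separation (from the class definitions) yields a lower bound of $r+1$ in each of the four subcases; for instance $C_1\setminus V \to C_2 \setminus V$ with projections $v_{j_i}, v_{j_{i+1}}$ gives $(j_i+1) + (j_{i+1}-j_i) + (r-j_{i+1}) = r+1$, contradicting $|b| \leq r$ for every $b \in B$. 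The main subtlety of the proof is precisely the choice of the midpoint split, which makes the two off-$V$ thicknesses combine with the parameter separation to overshoot $r$ by exactly~$1$.

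With this impossibility in hand, the lemma follows by a maximum-index argument. Set $I = \{i : p_i \in L \cup (C_1 \setminus V)\}$; then $0 \in I$ (since $p_0=x \in L$) but $n \notin I$ (since $p_n=y \in R$). Writing $i^* = \max I$, the step from $p_{i^*}$ cannot land in $(C_2 \setminus V) \cup R$, so $p_{i^*+1} \in V$; and $y \notin V$ forces $i^*+1 < n$, so the trajectory visits $V$ strictly before arriving at $y$.
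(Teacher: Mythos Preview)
The paper does not supply its own proof of this lemma; it simply quotes Derriennic \cite{De1}, Lemme~1, and moves on. So there is no in-paper argument to compare against.

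That said, your argument is correct and is essentially a clean reworking of Derriennic's original idea. The characterisation of $V$ via the projection $\pi(p)$ and the height $h(p)$ is right: if $\pi(p)=v_j$ then $d(p,z)=h(p)+j$ and $d(p,z')=h(p)+(r-1-j)$, so $p\in V$ iff $h(p)\le\min(j,r-1-j)$; and if $\pi(p)\notin[z,z']$ then one of $d(p,z),d(p,z')$ already exceeds $r-1$. Your step-length formula $d(p_i,p_{i+1})=h(p_i)+\big(t(p_{i+1})-t(p_i)\big)+h(p_{i+1})$ is valid precisely because $\pi(p_i)\ne\pi(p_{i+1})$ in every crossing case, so the concatenation $p_i\to\pi(p_i)\to\pi(p_{i+1})\to p_{i+1}$ is a reduced path in the tree. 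The four subcases all give a lower bound of $r+1$, as you compute; the midpoint split is exactly what makes the two height estimates $h\ge j+1$ and $h\ge r-j$ add up correctly. The final maximum-index argument is sound, and $y\notin V$ ensures the visit to $V$ is strictly before time $n$.

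One small remark: you might make explicit that the step-length identity requires $\pi(p_i)\ne\pi(p_{i+1})$ (otherwise it is only an upper bound), and that this hypothesis is automatic in each of your four crossing subcases since the projection regions are disjoint. It is implicit in what you wrote but worth one sentence.
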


For $V,W$ finite subsets of $F$, denote by $A_V^W$ the matrix such that the row vectors are the $\a_v^W (w), w \in W$. In particular, if $W = \{y\}$, set $u_V^y $ equal to  the (column) vector $$ u_V^y = A_V^{\{y\}} = (\a _v^{\{y\}} (y))_{v\in V} = (u(v,y))_{v\in V}.$$

With this notation,  Lemma \ref{barrier} and the strong Markov property yield, if $x$ and $y$ admit $V$ as a barrier:
$$ u(x,y) \; = \; \sum \a_x^V (v) u(v,y) \; = \; \< \a_x^V, u_V^y \>, $$
with the natural scalar product on $\R ^{V}$. Then, Derriennic makes  two observations: firstly, this formula iterates when one has $k$ successive disjoint barriers between $x$ and $y$
and secondly there are only a finite number of possible  matrices $A_V^W$ when $V$ and $W$ are successive disjoint barriers with $\d (V,W) = 1$. This gives the following formula for $u(x,y)$:
\begin{lem}[\cite{De1}, Lemme 2]\label{basiclemma} Let $p \in \M(B) $. There are $N$ square matrices with the same dimension $A_1, \cdots, A_N$, depending on $p$, such that for any $x,y \in F$, if $V_1, V_2, \cdots, V_k$ are disjoint successive barriers between $x$ and $y$ such that $\d (V_i, V_{i+1} ) = 1$ for $i = 1, \cdots, k-1$, there are $(k-1) $ indices $j_1, \cdots j_{k-1}$, depending only on the sequence $V_i$ such that:
\begin{equation}\label{basic} u(x,y) = \< \a_x^{V_1}, A_{j_1}\cdots A_{j_{k-1}} u_{V_k}^y \>. \end{equation}\end{lem}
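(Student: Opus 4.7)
The plan is to prove \eqref{basic} by induction on $k$, decomposing $u(x,y)$ via successive applications of Lemma \ref{barrier} and the strong Markov property, and then to identify the resulting transition matrices through the translation invariance of $p$.

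For the base case $k=1$, Lemma \ref{barrier} says that every trajectory from $x$ that reaches $y$ must first visit $V_1$. Applying the strong Markov property at the first hitting time of $V_1$ yields
$$
u(x,y) \; = \; \sum_{v \in V_1} \alpha_x^{V_1}(v)\, u(v,y) \; = \; \langle \alpha_x^{V_1},\, u_{V_1}^y \rangle.
$$
For $k \geq 2$, I first check that $V_2, \ldots, V_k$ remain successive disjoint barriers between any $v \in V_1$ and $y$, still satisfying $\d(V_i,V_{i+1})=1$: in the tree $F$, the geodesic from $v \in V_1$ to $y$ merges with the $x$-$y$ geodesic at (or before) $V_2$, so the defining centers $z_i, z_i'$ of $V_2, \ldots, V_k$ all lie on the $v$-$y$ geodesic, and the thickness $r-1$ of each barrier guarantees $\d(v,y) > r$. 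Applying the inductive hypothesis to $u(v,y)$ for each $v \in V_1$ and substituting into the base-case formula gives
$$
u(x,y) \; = \; \sum_{v \in V_1} \alpha_x^{V_1}(v)\, \langle \alpha_v^{V_2},\, A_{j_2} \cdots A_{j_{k-1}}\, u_{V_k}^y \rangle \; = \; \langle \alpha_x^{V_1},\, A(V_1,V_2)\, A_{j_2} \cdots A_{j_{k-1}}\, u_{V_k}^y \rangle,
$$
where $A(V_1,V_2)$ is the matrix with entries $\alpha_v^{V_2}(w)$ for $v \in V_1$, $w \in V_2$.

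To finish, I would invoke left-invariance of the random walk, $\alpha_{gv}^{gW}(gw) = \alpha_v^W(w)$ for all $g \in F$, which implies that $A(V_i, V_{i+1})$ depends only on the left-translation class of the pair $(V_i, V_{i+1})$. Each barrier is the intersection of two balls of radius $r-1$ whose centers lie at word distance $r-1$, and since $F$ acts transitively on itself, all barriers are translates of one another; in particular they share a common cardinality $n_r$. A pair of successive barriers at distance $1$ is then determined, up to translation, by its local shape in the tree, of which there are only finitely many possibilities. This yields a finite list $A_1, \ldots, A_N$ of square matrices of dimension $n_r$ (depending on $p$), and choosing $j_i$ to be the index of the class of $(V_i, V_{i+1})$ finishes the induction.

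The main obstacle is the geometric assertion that the barrier property is inherited when the starting point $x$ is replaced by an arbitrary $v \in V_1$: this requires a careful analysis of how $v$-$y$ geodesics merge into $x$-$y$ geodesics in the tree, exploiting the thickness $r-1$ of each barrier to keep its defining centers in the shared portion of the two geodesics. The uniformity of the matrix dimension, while intuitive, also requires an explicit use of the vertex-transitive action of $F$ on itself.
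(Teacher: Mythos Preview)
Your proposal is correct and follows exactly the approach the paper indicates: the paper does not give a detailed proof but simply records Derriennic's two observations---that the single-barrier identity $u(x,y)=\langle \alpha_x^V, u_V^y\rangle$ iterates through successive disjoint barriers, and that left-invariance of the walk forces only finitely many matrices $A_V^W$ to appear when $\d(V,W)=1$. Your induction, together with the translation-invariance argument for the finiteness and common dimension of the $A_j$, is precisely a fleshed-out version of that sketch.
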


By construction, the matrices $A_j$ have nonnegative entries and satisfy $\sum _w A_j (v,w) \leq 1$. Moreover, we have the following properties:
\begin{prop}[\cite{De1}, Corollaire 1] \label{zeros} Assume the set $B$ contains the generators and their inverses, then for each $p \in \M(B)$, for each $j = 1, \cdots , N$, the matrix $A_j $ has all its $0$ entries in full columns. \end{prop}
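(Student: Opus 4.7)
My plan is to translate the statement into a purely combinatorial path-existence question, then exploit a connectedness property of the barrier $V$. The entry $A_j(v,w) = \a_v^W(w)$ expands as a sum, over combinatorial paths $v = x_0, x_1, \ldots, x_n = w$ with each increment $x_{i-1}^{-1}x_i$ in $B$ and $x_i \notin W$ for $0 \leq i < n$, of the strictly positive weights $\prod_i p(x_{i-1}^{-1}x_i)$. Thus $\a_v^W(w) > 0$ iff at least one such combinatorial path exists, so the claim that each column of $A_j$ is either identically zero or everywhere positive reduces to: for each $w \in W$, if some $v_0 \in V$ admits a $B$-path to $w$ whose interior avoids $W$, then so does every $v \in V$.

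The crucial geometric fact I would establish is that the barrier $V$ is connected in the Cayley graph of $F$ relative to the standard generating set $\{a_{\pm 1}, \ldots, a_{\pm d}\}$. First, the word-metric geodesic $z = z_0, z_1, \ldots, z_{r-1} = z'$ lies entirely in $V$ (each $z_i$ is at distance $i \leq r-1$ from $z$ and $r-1-i \leq r-1$ from $z'$) and is a connected edge-path. For any other $v \in V$, let $z_a$ be its projection onto this axis; the tree identities $|v^{-1}z| = |v^{-1}z_a| + a$ and $|v^{-1}z'| = |v^{-1}z_a| + (r-1-a)$ combined with the defining inequalities of $V$ yield $|v^{-1}z_a| \leq \min(a, r-1-a)$, and the same bound persists at every vertex of the tree-geodesic from $v$ to $z_a$. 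Hence that geodesic lies inside $V$, and concatenating with the axis produces a Cayley-graph path in $V$ joining $v$ to any other element of $V$.

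Finally, given $w \in W$ and $v_0 \in V$ with $\a_{v_0}^W(w) > 0$, for any $v \in V$ I would concatenate: first a Cayley-graph path from $v$ to $v_0$ lying inside $V$, whose steps are standard generators (which lie in $B$ by hypothesis) and which avoids $W$ since the successive barriers are disjoint; followed by a $B$-path from $v_0$ to $w$ whose interior avoids $W$, furnished by $\a_{v_0}^W(w) > 0$. The concatenation first meets $W$ at $w$, witnessing $\a_v^W(w) > 0$. I expect the main obstacle to be the tree-geometric verification that $V$ is connected; the hypothesis that $B$ contains the generators enters only to guarantee that each edge of the connecting path inside $V$ carries positive $p$-mass.
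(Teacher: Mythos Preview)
The paper does not supply its own proof of this proposition; it simply cites \cite{De1}, Corollaire~1, and then remarks that ``columns of $0$'s correspond to the subset $W_{j+1}$ of points in $V_{j+1}$ which cannot be entry points from paths starting in $V_j$.'' Your argument is correct and recovers precisely this description: you reduce positivity of $\a_v^W(w)$ to the existence of a $B$-path from $v$ first entering $W$ at $w$, show that the barrier $V$ is connected in the standard Cayley graph (so any two $v,v_0\in V$ are joined by a generator-path inside $V$, hence disjoint from $W$), and concatenate. The hypothesis that $B$ contains the generators is used exactly where it should be, to ensure the connecting path inside $V$ consists of legal steps. Your tree-metric verification that the geodesic from $v$ to its projection $z_a$ on the axis $[z,z']$ stays inside $V$ is correct (each intermediate point $v'$ satisfies $d(v',z)=d(v',z_a)+a\le (r-1-a)+a=r-1$ and symmetrically for $z'$), so there is no gap.
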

From the proof of proposition \ref{zeros}, if the set $B$ contains the generators and their inverses and $A_j = A_{V_j}^{V_{j+1}}$, columns of $0$'s correspond to the subset $W_{j+1}$ of points in $V_{j+1}$ which cannot be entry points from paths starting in $V_j$. In particular, they depend only of the geometry of $B$ and are the same for all $p \in \M(B)$.

We may -- and we shall from now on -- assume that the set $B$ contains the generators and their inverses. Indeed, since $h_{p^{(k)}} = k h_p$ and $\ell _{p^{(k)}} = k \ell _p $, we can replace in Theorem \ref{main} the probability $p$ by a convolution of order high enough that the generators and their inverses have positive probability. Then, by Proposition \ref{zeros} the matrices $A_j (q)$ have the same columns of zeros for all $q \in \M(B)$. Moreover, 

\begin{prop}\label{cont2} For each $j = 1, \cdots, N$, the mapping $p \mapsto A_j$ extends to analytic function on a neighborhood of $\M (B)$ in $\C^B$ into the set of complex matrices with the same configuration of zeros as $A_j$. \end{prop}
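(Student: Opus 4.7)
The plan is to reduce Proposition \ref{cont2} to Proposition \ref{cont} combined with the geometric observation following Proposition \ref{zeros}. By the construction in Lemma \ref{basiclemma}, each $A_j$ arises as a matrix $A_V^W$ with $V, W$ a pair of successive disjoint barriers at distance $1$, whose $(v,w)$ entry is the transition probability $\a_v^W(w)$.

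First I would apply Proposition \ref{cont} to each of the finitely many entries $\a_v^W(w)$ appearing in the list $A_1, \ldots, A_N$. This produces finitely many neighborhoods in $\C^B$ on which the respective entries extend analytically; their intersection is a single neighborhood $\O_p$ of $p$ on which every entry of every $A_j(q)$ is analytic in $q$, giving the analyticity part of the proposition.

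Second I would verify that the configuration of zeros is preserved throughout $\O_p$. The $0$-columns of $A_j = A_{V_j}^{V_{j+1}}$ are indexed by the subset $W_{j+1}\subset V_{j+1}$ of points that cannot serve as a first-entry point of any trajectory started in $V_j$; as emphasized after Proposition \ref{zeros}, this is a combinatorial property of the ball configuration which depends only on $B$, not on $p$. For such $w\in W_{j+1}$ and any $v\in V_j$, the set of length-$n$ paths in $B$ from $v$ that first hit $V_{j+1}$ at $w$ is empty for every $n$, so each polynomial $\a_v^{n,V_{j+1}}(w)$ from the proof of Proposition \ref{cont} is the zero polynomial in $q$ — not merely a polynomial vanishing on $\M(B)$. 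Its extension to $\O_p$ is therefore identically zero, and the claimed zero pattern of $A_j(q)$ persists on $\O_p$.

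The only point that requires any care is this last step: upgrading ``zero on $\M(B)$'' to ``zero on $\O_p$''. I would not use analytic continuation for this (although, once analyticity is in hand, that would also suffice), but instead the stronger statement that the constituent polynomials $\a_v^{n,V_{j+1}}(w)$ are empty sums, hence vanish identically as polynomials in $q\in\C^B$. This is the only mild obstacle in an otherwise direct corollary of Propositions \ref{cont} and \ref{zeros}.
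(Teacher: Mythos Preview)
Your reduction to Proposition \ref{cont} is exactly the paper's approach; the paper's own proof is essentially the one-line remark that the argument is ``completely analogous'' to that of Proposition \ref{cont}.

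There is, however, a small omission. ``Same configuration of zeros'' means two things: the entries that vanish on $\M(B)$ must remain zero on $\O_p$, \emph{and} the entries that are nonzero on $\M(B)$ must remain nonzero on $\O_p$. You carefully argue the first half (the zero columns correspond to empty path sets, so the polynomials $\a_v^{n,V_{j+1}}(w)$ are empty sums and vanish identically), but you say nothing about the second. The paper handles the second half with the phrase ``one may have to take a smaller neighborhood for the sake of avoiding introducing new zeros'': since each nonzero entry $\a_v^{V_{j+1}}(w)$ is strictly positive at $p\in\M(B)$ and analytic (hence continuous) on $\O_p$, one simply shrinks $\O_p$ so that these finitely many functions stay away from $0$. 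Add this sentence and your proof is complete.
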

\begin{proof} The proof is completely analogous to the proof of Proposition \ref{cont}; one may have to take a smaller neighborhood for the sake of  avoiding  introducing new zeros. \end{proof}

\ 

We are interested in the function $\Phi : \partial F \to \R, \Phi (\xi ) = - \ln K_\xi (\xi _1).$ By (\ref{Martin}), (\ref{basic}) and Deriennic's Theorem, we have:
\begin{eqnarray*}
 \Phi (\xi ) &=& -\ln \lim_{ n \to \infty } K_{\xi _1\xi_2 \cdots \xi _n} (\xi _1) \\
&=& -\ln \lim _{n \to \infty } \frac{u(\xi _1,\xi _1\xi _2 \cdots \xi _n)}{u(e ,\xi _1\xi _2 \cdots \xi _n)} \\
&=& -\ln \lim _{k \to \infty } \frac{ \< \a_{\xi _1}^{V_1(\xi ) }, A_{j_1}(\xi )\cdots A_{j_{k-1}}(\xi )  u_{V_k(\xi )}^{y_k} \> } { \< \a_{e}^{V_1(\xi ) }, A_{j_1}(\xi )\cdots A_{j_{k-1}}(\xi )  u_{V_k(\xi ) }^{y_k} \> }, \end{eqnarray*}
where $A_{j_s}(\xi ) = A_{V_s(\xi)}^{V_{s+1}(\xi)} $, the $V_s(\xi )$ are successive disjoint barriers between $\xi _1$ and $\xi $ with $\d (V_s (\xi ), V_{s+1}(\xi)) = 1$ for all $s > 1$, $\d(\xi _1, V_1) = 1$  and $y_k $ is the closest point beyond  $V_k$ on the geodesic from $\xi _1$ to $\xi $.

\

Define on the nonnegative  convex cone $C_0$ in $\R^m$ the projective distance between half lines as $$ \vt (f,g) \; := \; | \ln [f,g,h,h']| ,$$
where $h,h'$ are the intersections of the boundaries of the cone with the plane $(f,g)$ and $[f,g, h, h'] $ is the cross ratio of the four directions in the same plane. Represent the space of directions as the sector of the unit sphere $ D = C_0 \cup S^{m-1} $; then, $\vt $ defines a distance on $D$. Let $A$ be a $m \times m $ matrix with nonpositive entries and let $T : D \to D$ be the  projective action of $A$. Then, by \cite{B}:
\begin{equation}\label{contr}
\vt (Tf,Tg) \; \leq \b \vt (f,g), {\textrm { where }} \b = \tanh \left( \frac{1}{4} {\textrm { Diam }} T(D) \right). \end{equation}
When $A_j $ is one of the matrices of Lemma \ref{basiclemma}, it acts on $\R ^{V}$ and the image  $T_j (D)$ has finite diameter, so that  $ \b_j := \tanh \left( \frac{1}{4} {\textrm { Diam }} T_j(D) \right) < 1$.  Set $ \b_0 := \max _{j = 1, \cdots, N}  \b_j.$ Then, $\b_0 < 1$.

Set $\displaystyle f_k (\xi ) := \frac{u_{V_k(\xi)}^{y_k}}{\| u_{V_k(\xi)}^{y_k}\|}, \a(\xi) := \a_{e}^{V_1(\xi )}, \a_1(\xi) := \a_{\xi _1}^{V_1(\xi )}$.  For all $\xi $, $f_k (\xi ) \in D$ and $\a(\xi), \a_1(\xi) \in C_0 -\{0\} $. The above formula for $\Phi (\xi) $ becomes:
\begin{equation}\label{Derriennic} 
\Phi (\xi ) =  -\ln \lim _{k \to \infty } \frac{ \< \a_1(\xi), T_{j_1}(\xi )\cdots T_{j_{k-1}}(\xi )  f_k(\xi)  \> } { \< \a(\xi), T_{j_1}(\xi )\cdots T_{j_{k-1}}(\xi )  f_k(\xi ) \> }.
\end{equation}

\

\begin{prop}\label{holder} Fix $p \in \M$. The function $\xi \mapsto \Phi (\xi)  $ is H\"older continuous on $\partial F$. \end{prop}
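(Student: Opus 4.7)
The plan is to estimate $|\Phi(\xi) - \Phi(\xi')|$ when $(\xi \wedge \xi') \geq n$ is large, by combining the projective contraction \eqref{contr} with formula \eqref{Derriennic}. Because $\xi$ and $\xi'$ share their first $n$ letters, the geodesics in $F$ from $e$ to $\xi$ and to $\xi'$ agree on an initial segment of length $n$; hence the barriers $V_s(\xi) = V_s(\xi')$, and the corresponding matrices $T_{j_s}(\xi) = T_{j_s}(\xi')$, coincide for all indices $s$ up to some $s_0 = s_0(n)$ with $s_0(n) \geq cn$. In particular $\xi_1 = \xi_1'$, so that $\alpha(\xi) = \alpha(\xi')$ and $\alpha_1(\xi) = \alpha_1(\xi')$ as well.

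Set $M := T_{j_1}(\xi) \cdots T_{j_{s_0-1}}(\xi)$ for the common initial matrix product. For every $k \geq s_0$, both $v_k(\xi) := T_{j_1}(\xi) \cdots T_{j_{k-1}}(\xi) f_k(\xi)$ and its counterpart $v_k(\xi')$ lie in $M(D)$. Iterating \eqref{contr} along the $s_0 - 1$ common factors shows that the $\vartheta$-diameter of $M(D)$ is at most $C \beta_0^{s_0}$ for some constant $C$. Letting $k \to \infty$ in \eqref{Derriennic} (the same contraction provides convergence of $v_k(\xi)$ to a direction $v_\infty(\xi) \in \overline{M(D)}$), one obtains
\[
\vartheta(v_\infty(\xi), v_\infty(\xi')) \leq C \beta_0^{s_0(n)}.
\]

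To conclude, I would invoke the classical non-expansion property for Hilbert's projective metric: since $\alpha$ and $\alpha_1$ pair strictly positively with every vector in $M(D)$, both ratios $\langle \alpha, v \rangle / \langle \alpha, v' \rangle$ and $\langle \alpha_1, v \rangle / \langle \alpha_1, v' \rangle$ are convex combinations of the coordinate ratios $v_i / v'_i$, hence lie in the same interval of length $\vartheta(v, v')$. Consequently $R(v) := \langle \alpha_1, v\rangle / \langle \alpha, v\rangle$ is projectively invariant and satisfies $|\ln R(v) - \ln R(v')| \leq \vartheta(v, v')$, which yields
\[
|\Phi(\xi) - \Phi(\xi')| = |\ln R(v_\infty(\xi)) - \ln R(v_\infty(\xi'))| \leq C \beta_0^{s_0(n)} \leq C' e^{-\tau n} = C' \delta(\xi, \xi')^{\tau}
\]
for some $\tau > 0$, establishing H\"older continuity.

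The main obstacle is ensuring that $\alpha(\xi)$ and $\alpha_1(\xi)$ really are \emph{strictly positive} on $M(D)$, which is needed both to define $\ln R$ and to apply the $1$-Lipschitz estimate. This is exactly where Proposition \ref{zeros} enters: it forces the zeros of the matrices $A_j$ into full columns corresponding to inaccessible entry vertices of the target barrier, so that $M(D)$ is supported only on coordinates that are in turn accessible from both $e$ and $\xi_1$, guaranteeing $\langle \alpha, v\rangle, \langle \alpha_1, v\rangle > 0$ uniformly there. A secondary routine point is the geometric lower bound $s_0(n) \geq cn$ for the number of successive unit-separated barriers fitting along a geodesic segment of length $n$.
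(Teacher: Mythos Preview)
Your argument is correct and follows essentially the same route as the paper's proof: both exploit that close points $\xi,\xi'$ share an initial block of barriers and matrices, apply the Birkhoff contraction \eqref{contr} along that common block to squeeze the projective distance to $C\beta_0^{s_0}$, and then read off the H\"older estimate from the ratio $\langle \a_1,\cdot\rangle/\langle \a,\cdot\rangle$. The only cosmetic differences are that the paper keeps $k$ finite and passes to the limit in \eqref{holderratio} rather than first forming $v_\infty$, and that where you spell out the $1$-Lipschitz bound $|\ln R(v)-\ln R(v')|\leq \vt(v,v')$ and invoke Proposition~\ref{zeros} for positivity, the paper simply notes that $\a,\a_1$ range over a finite subset of $C_0\setminus\{0\}$ and that each $T_j(D)$ has finite $\vt$-diameter (which already forces $M(D)$ into the interior of $C_0$, so $\langle \a,v\rangle,\langle \a_1,v\rangle>0$ automatically).
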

\begin{proof} Let $\xi, \xi' $ be two points of $\partial F$ with $\d(\xi , \xi ' ) \leq \exp (-((n+1) r +1))$. The points $\xi $ and $\xi '$ have the same first $(n+1)r +1$ coordinates. In particular, $V_s (\xi ) = V_{s} (\xi ') $ for $1 \leq s \leq n$. By using (\ref{Derriennic}), we see that $\Phi (\xi ') - \Phi (\xi )$ is given by the limit, as $k$ goes to infinity, of 
$$ \ln \frac{ \< \a_1(\xi ) , T_{j_1}(\xi )\cdots T_{j_{k-1}}(\xi )  f_k(\xi)  \> }{ \< \a_1(\xi ') , T_{j_1}(\xi ')\cdots T_{j_{k-1}}(\xi ')  f_k(\xi')  \> }
\frac{ \< \a(\xi ') , T_{j_1}(\xi ')\cdots T_{j_{k-1}}(\xi ')  f_k(\xi ') \> }{ \< \a(\xi) , T_{j_1}(\xi )\cdots T_{j_{k-1}}(\xi )  f_k(\xi ) \> }.$$
We have $ \a_1(\xi) =  \a_1(\xi ') =: \a_1,  \a(\xi)  =  \a(\xi ') =: \a $ and $T_{j_s} (\xi ) = T_{j_s} (\xi ') =: T_{j_s} $ for $ s= 1, \cdots, n.$  Moreover, for any $f,f' \in D$, 
$$  \vt \left( T_{j_1}(\xi )\cdots T_{j_{k-1}}(\xi )  f, T_{j_1}(\xi ')\cdots T_{j_{k-1}}(\xi ')  f' \right) \;= \;  \vt \left(T_{j_1}\cdots T_{j_{n-1}}  g_k, T_{j_1}\cdots T_{j_{n-1}} g' _k\right) $$
for $g_k = T_{j_n} T_{j_{n+1}}(\xi)\cdots T_{j_{k-1}}(\xi )  f, g' _k = T_{j_n} T_{j_{n+1}}(\xi')\cdots T_{j_{k-1}}(\xi ')  f'$. 

\

We have $\vt (g_k,g'_k ) \leq {\textrm{ Diam } } T_{j_n } D < \infty $ and, by repeated application of (\ref{contr}),
\begin{equation} \label{regularity} \vt \left(T_{j_1}\cdots T_{j_{n-1}}  g_k, T_{j_1}\cdots T_{j_{n-1}}  g'_k   \right) \; \leq \; \b_0^{n-1} \vt \left(g_k, g'_k\right) \; \leq C \b_0^n .\end{equation}
Using all the above notation, we get
\begin{equation}\label{holderratio} \Phi (\xi ) - \Phi (\xi ') = \ln \lim _k \frac {\< \a_1, T_{j_1}\cdots T_{j_{n-1}}  g'_k \>} {\< \a_1, T_{j_1}\cdots T_{j_{n-1}}  g_k \>} \frac  {\< \a, T_{j_1}\cdots T_{j_{n-1}}  g_k \>}  {\< \a, T_{j_1}\cdots T_{j_{n-1}}  g'_k \>}.\end{equation}

As $\xi $ varies, $\a $ and $\a_1$ belong to a finite family of vectors of $C_0 - \{0\}$. It then follows from (\ref{regularity}) that, as soon as $\d(\xi , \xi ' ) \leq \exp (-((n+1) r+1))$, $|\Phi (\xi ) - \Phi (\xi ')| \leq C \b_0^n .$
\end{proof}

\

Let us choose $\b, \b_0 ^{1/r} < \b  < 1$, and consider the space $\G_\b $ of functions $\phi $ on $\partial F$ such that there is a constant $C_\b$ with the property that, if the points $\xi $ and $\xi '$ have the same first $n $ coordinates, then $| \phi (\xi ) - \phi (\xi ')| < C_\b \b ^n. $ For $\phi \in \G_\b $, denote $\| \phi \|_\b $ the best constant $C_\b $  in this definition. The space $\G_\b $ is a Banach space for the norm $ \| \phi \|  :=   \| \phi \|_\b  + \max _{\partial F}  |\phi | .$ Proposition \ref{holder} says that for $p \in \M(B)  $, the function $\Phi _p(\xi ) = 
- \ln K_\xi (\xi _1)$ belongs to $\G_\b$.

\section{Regularity of the Martin kernel}

We want to extend the mapping $p \mapsto \Phi_p$ to a neighborhood $\O _p$ of $p$ in $\C^B$. Firstly, we redefine $\G_\g$ as the space of complex  functions $\phi $ on $\partial F$ such that there is a constant $C_\g$ with the property that, for all $n \geq 0$, if the points $\xi $ and $\xi '$ have the same first $n $ coordinates, then $| \phi (\xi ) - \phi (\xi ')| < C_\g \g ^n.$ The space $\G_\g $ is a complex Banach space for the norm $ \| \phi \|  :=   \| \phi \|_\g  + \max _{\partial F}  |\phi | ,$ where  $\| \phi \|_\g $ the best possible constant $C_\g $. In this section,   we find a neighborhood $\O_p $ and a $\g = \g (p), 0< \g <1$, such that formula (\ref{Derriennic}) makes sense on $\O_p$ and defines a function in $\G_\g$. 

\

In  recent papers, Rugh (\cite{Ru}) and Dubois (\cite{Du}) show how to extend (\ref{contr}) to the complex setting. In a complex Banach  space $X$, they define a $\C$-cone as a  subset invariant by multiplication by $\C$, different from $\{0\}$ and not containing any complex 2-dimensional subspace in its closure. A $\C$-cone $\CC$ is called linearly convex if  each point in the complement of $\CC $ is contain in a complex hyperplane not intersecting $\CC$. Let $K < +\infty $. A $\C$-cone $\CC$ is called $K$-regular if it has some interior and if, for each vector space $P$ of complex dimension 2, there is some nonzero linear form $ m \in X^\ast $ such that, for all $u  \in \CC \cap P$, 
\begin{equation*} \| m\| \|u\| \; \leq \; K | \<m,u \>|.\end{equation*}
Let $\CC $ be a linearly convex $\C$-cone. A projective distance $\vt_\CC$ on $(\CC - \{0\}) \times (\CC - \{0\})$ is defined as follows (\cite{Du}, Section 2): if $f$ and $g$ are colinear, set $\vt _\CC (x,y) = 0 $; otherwise, consider the following set  $E(f,g)$:
$$ E(f,g)\;  := \;\{ z, z \in  \C,   zf-g   \not \in \CC \},$$
and then define 
$$\vt _\CC (f,g) = \ln \frac {b}{a}, \quad {\textrm {where }} b = \sup |E(f,g)| \in (0,+ \infty], a = \inf |E(f,g)| \in [0,+\infty ).$$

\begin{prop}[\cite{Du}, Theorem 2.7] Let $X_1,X_2$ be complex Banach spaces, and let $\CC_1 \subset X_1, \CC_2 \subset X_2$ be complex cones. Let $A: X_1 \to X_2$ be a linear map with $A(\CC_1 - \{0\}) \subset (\CC_2 - \{0\})$ and assume that $$\D : = \sup _{f,g \in (\CC_1 - \{0\}) } \vt _{\CC _2}(Af, Ag ) < +\infty .$$ Then, for all $f,g \in \CC_1$,
\begin{equation}\label{ccontr} \vt _{\CC _2} (Af, Ag) \; \leq \; \tanh \Big(\frac {\D }{4}\Big) \vt _{\CC_1} (f,g). \end{equation}
\end{prop}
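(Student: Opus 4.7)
The plan is to reduce the proof to the two-dimensional case and identify the complex projective distance $\vt_\CC$ with a Poincar\'e-type distance on a domain in $\C P^1$, so that the contraction estimate becomes an instance of the Schwarz--Pick lemma. For non-collinear $f, g \in \CC_1 - \{0\}$, introduce the complex 2-plane $P_1 := \C f + \C g \subset X_1$ and its image $P_2 := A(P_1) \subset X_2$. Since $E(f,g) = \{z \in \C : zf - g \notin \CC_1\}$ depends only on $\CC_1 \cap P_1$, and analogously $E(Af, Ag)$ depends only on $\CC_2 \cap P_2$, the entire question reduces to one about the restricted cones on these two-dimensional spaces. (If $A$ collapses $P_1$ onto a line, then $Af, Ag$ are collinear and $\vt_{\CC_2}(Af, Ag) = 0$, so the inequality is trivial.)

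The key reinterpretation is this. Working in the affine chart $z \mapsto [zf - g]$ on $\mathbb{P}(P_1) \cong \C P^1$, the cone section $\CC_1 \cap P_1$ projects to a domain $U_1 \subset \C P^1$ whose complement, after adjoining the point $\infty = [f]$, is exactly $E(f,g)$. Linear convexity of $\CC_1$ ensures that $U_1$ is a hyperbolic domain in $\C P^1$; a direct cross-ratio calculation then shows that $\vt_{\CC_1}(f,g) = \ln(b/a)$ coincides with the Poincar\'e distance in $U_1$ between the two distinguished points $[f]$ and $[g]$. The same analysis applied to $\CC_2 \cap P_2$ produces a hyperbolic domain $U_2 \subset \mathbb{P}(P_2)$ carrying the Poincar\'e metric $\vt_{\CC_2}$.

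Under these identifications, the linear map $A|_{P_1}$ descends to a M\"obius transformation $A_\ast$ sending $U_1$ holomorphically into $U_2$. The hypothesis $\D < +\infty$ says precisely that the image $A_\ast(U_1)$ has Poincar\'e diameter at most $\D$ inside $U_2$, and hence sits in some Poincar\'e ball of radius $\D/2$. Uniformizing $U_1, U_2$ as copies of the unit disk $\mathbb{D}$, this ball corresponds to a Euclidean subdisk of radius $r := \tanh(\D/4)$. Writing the resulting holomorphic map $\mathbb{D} \to \mathbb{D}$ as $r$ times a self-map of $\mathbb{D}$ and comparing the Poincar\'e metrics of $\mathbb{D}$ and of its concentric subdisk of radius $r$ yields global contraction by the factor $r = \tanh(\D/4)$. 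Undoing the identifications gives exactly the claim (\ref{ccontr}).

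The main obstacle --- and the substantive content of Dubois's construction in \cite{Du} --- is the hyperbolic identification step: one must justify rigorously that the purely affine formula $\vt_\CC(f,g) = \ln(\sup|E(f,g)|/\inf|E(f,g)|)$ really realizes the Poincar\'e distance on the projected domain $U_\CC$ under projectivization, and that linear convexity by itself places us in the realm of hyperbolic domains on which Schwarz--Pick applies (in particular that $U_\CC$ omits at least two points of $\C P^1$ and carries a well-defined uniformization). Once this identification is in place, the contraction estimate is a one-step Schwarz--Pick computation that reproduces exactly the $\tanh(\D/4)$ factor familiar from the classical real Birkhoff cone theorem.
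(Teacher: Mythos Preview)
The paper does not supply its own proof of this proposition; it is quoted verbatim from \cite{Du}, Theorem~2.7, and used as a black box, so there is nothing in the paper to compare your argument against beyond the citation itself.

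That said, your sketch rests on an identification that does not hold. The claim that Dubois's distance $\vt_\CC(f,g)=\ln(b/a)$ coincides with the Poincar\'e distance between $[f]$ and $[g]$ in the projectivized domain $U_1$ is correct only when $E(f,g)$ happens to be a round disk in the chosen affine chart (one checks this by a M\"obius map sending the exterior of that disk to the unit disk). For a general linearly convex $\C$-cone restricted to a $2$-plane, $E(f,g)$ need not be a disk --- already for $\C_+^m$ with $m\geq 3$ the set $E(f,g)$ is a finite union of disks and half-planes --- and then $\ln(\sup|E|/\inf|E|)$ is \emph{not} the Poincar\'e distance on $U_1$. In fact Dubois's gauge $\vt_\CC$ and Rugh's hyperbolic gauge from \cite{Ru} are genuinely different (though comparable) constructions; it is Rugh's, not Dubois's, that is built from the Poincar\'e metric on $2$-plane sections. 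Dubois's own proof of his Theorem~2.7 is a direct elementary estimate on the sets $E(f,g)$ and $E(Af,Ag)$, and the factor $\tanh(\D/4)$ emerges from an explicit optimization rather than from Schwarz--Pick. You rightly flag the hyperbolic identification as ``the main obstacle,'' but the obstacle is fatal for this route: the identification is false as stated, so Schwarz--Pick does not reach the metric $\vt_\CC$ defined in the paper with the exact constant required.
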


\begin{prop}[\cite{Du}, Lemma 2.6]\label{cdist} Let $\CC $ be a $K$-regular, linearly convex $\C$-cone and let $f\sim g $ if, and only if,  there is $\l , \l \not = 0 $ such that $ \l f = g $. Then $\vt _\CC $ defines a complete projective metric on $\CC / \sim $. Moreover, if $f,g \in \CC, \|f\|= \|g\| =1$,  then there is a complex number $\rho$ of modulus 1,  $\rho = \rho (f,g) $, such that 
\begin{equation}\label{compar} \|\rho f - g \| \; \leq \; K \vt_\CC (f,g).\end{equation}
\end{prop}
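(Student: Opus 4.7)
The proposition contains two essentially independent assertions, which I would treat in the order most convenient for the completeness claim: (a) the algebraic and metric properties of $\vt_\CC$ on $\CC/\sim$, and (b) the Banach-space comparison $\|\rho f - g\| \leq K\vt_\CC(f,g)$. Once (b) is in hand, completeness in (a) becomes quick, because any Cauchy sequence in $\vt_\CC$ can be lifted, via (b), to a Cauchy sequence in the Banach norm up to unit-modulus scalars.

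For the basic metric properties of $\vt_\CC$, I would verify in turn symmetry, projective invariance, separation, and the triangle inequality. Symmetry comes from the identity $E(g,f) = \{1/z : z \in E(f,g)\}$, itself a direct consequence of $\C^{\ast}$-invariance of $\CC$: indeed $wg - f \in \CC$ iff $(1/w)f - g \in \CC$, so $b$ and $a$ swap to their reciprocals and the ratio is preserved. Projective invariance is immediate from the definition. For separation, linear convexity forces $E(f,g)$ to be nonempty whenever $f \not\sim g$, while the facts that $-g \in \CC$ and that $zf - g$ approaches the direction of $f$ for large $|z|$ show that $0$ and $\infty$ lie in the complement of $E(f,g)$, so $a > 0$ and $b < \infty$. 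The delicate step is the triangle inequality; I would follow Dubois \cite{Du}, restricting to a 3-plane spanned by $f, g, h$, and showing that $\C^{\ast}$-invariance and linear convexity yield a multiplicative composition $E(f,h) \subset E(f,g) \cdot E(g,h)$ that produces $\ln(b_{f,h}/a_{f,h}) \leq \ln(b_{f,g}/a_{f,g}) + \ln(b_{g,h}/a_{g,h})$.

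For the comparison estimate, I would restrict to the complex 2-plane $P = \C f + \C g$ and apply $K$-regularity to obtain a nonzero $m \in X^{\ast}$ with $\|m\|\|u\| \leq K |\langle m, u\rangle|$ for every $u \in \CC \cap P$. In particular $\langle m, f\rangle, \langle m, g\rangle \neq 0$. Setting $\lambda := \langle m, g\rangle/\langle m, f\rangle$ gives $\langle m, \lambda f - g\rangle = 0$, so by $K$-regularity $\lambda f - g \notin \CC$ whenever $f \not\sim g$, hence $\lambda \in E(f,g)$ and therefore $a \leq |\lambda| \leq b$. The natural choice is $\rho := \lambda/|\lambda|$. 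Choosing coordinates on $P$ so that $m$ is the first coordinate, the image of $\CC \cap P$ in the affine chart on $\C P^1$ is contained in the disk of radius $\sqrt{K^2 - 1}$, and the set $E(f,g) \subset \C$ is the M\"obius preimage of the exterior of this projective disk. A direct planar calculation in terms of the center and radius of $E(f,g)$, combined with the elementary inequality $1 - a/b \leq \ln(b/a)$, produces the asserted bound $\|\rho f - g\| \leq K \ln(b/a)$.

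The main obstacle is the planar estimate concluding (b): the universal constant $K$ has to be matched carefully against both the norm on $P$ and the projective diameter of $\CC \cap P$, and this requires the scaling introduced by $K$-regularity to be used in a quantitatively tight way. Granting (b), completeness follows immediately: for a Cauchy sequence $[f_n]$ take representatives with $\|f_n\| = 1$; the inequality produces unit-modulus complex numbers $\sigma_n$ such that $\sigma_n f_n$ is Cauchy in the Banach norm, and the limit lies in $\CC$ (using that $\CC$ is closed away from the origin, or by a further application of linear convexity), giving the $\vt_\CC$-limit of $[f_n]$.
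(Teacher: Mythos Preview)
The paper gives no proof of this proposition: it is quoted verbatim from Dubois \cite{Du}, Lemma~2.6, and used as a black box. There is therefore no ``paper's own proof'' to compare your argument against.

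Your outline is essentially the strategy of Dubois's original argument: obtain the norm comparison (\ref{compar}) from $K$-regularity by restricting to the $2$-plane $P=\C f+\C g$, taking the linear form $m$ supplied by regularity, setting $\lambda=\langle m,g\rangle/\langle m,f\rangle$ and $\rho=\lambda/|\lambda|$, and then deducing completeness by lifting a $\vt_\CC$-Cauchy sequence to a norm-Cauchy sequence of unit vectors via (\ref{compar}). One small correction to your sketch: the nonemptiness of $E(f,g)$ when $f\not\sim g$ comes from the defining axiom of a $\C$-cone (no complex $2$-plane lies in the closure of $\CC$), not from linear convexity; linear convexity is what you need later, both for the triangle inequality and for recognising $E(f,g)$, in the affine chart determined by $m$, as the complement of a disk.
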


\begin{prop}[\cite{Ru}, Corollary 5.6, \cite{Du}, Remark 3.6]\label{ccone} For $m \geq 1$, the set
$$ \C _+^ m = \{ u \in \C^m: \mathfrak {Re}( u_i \overline {u_j}) \geq 0, \forall i,j\} = \{ u \in \C^m: |u_i + u_j | \geq |u_i - u_j |, \forall i,j \} $$ is a regular linearly convex  $\C$-cone. The inclusion $$\pi :(C_0 -\{0\} , \vt)\longrightarrow (\C_+^m - \{0\}, \vt_{\C_+^m} ) $$
is an isometric embedding.
\end{prop}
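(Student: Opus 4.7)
The plan is to verify the three properties making $\C_+^m$ a regular linearly convex $\C$-cone, then to compute the projective metric $\vt_{\C_+^m}$ explicitly on real positive directions and compare with the cross-ratio defining $\vt$ on $C_0$. The two descriptions of $\C_+^m$ coincide via the polarization identity $|u_i+u_j|^2 - |u_i-u_j|^2 = 4\,\mathfrak{Re}(u_i\overline{u_j})$. Invariance under multiplication by $\lambda\in\C^\ast$ is immediate from $\mathfrak{Re}\bigl((\lambda u)_i \overline{(\lambda u)_j}\bigr) = |\lambda|^2\,\mathfrak{Re}(u_i\overline{u_j})$, and absence of a complex $2$-plane in $\overline{\C_+^m}$ follows because any two $\C$-linearly independent vectors in the closure would admit a combination whose phase difference at some coordinate pair lies in the open left half-plane.

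For linear convexity, given $u\notin \C_+^m$ with $\mathfrak{Re}(u_i\overline{u_j})<0$ for some pair $(i,j)$ with $u_i\neq 0$, the hyperplane $H_c=\{w:\, w_j = c\, w_i\}$ with $c=u_j/u_i$ contains $u$, and on $\{w_i\neq 0\}\cap H_c$ one has $\mathfrak{Re}(w_i\overline{w_j}) = |w_i|^2\,\mathfrak{Re}(c) < 0$; a small generic perturbation of the defining form rules out the residual slice $w_i=0$ meeting $\C_+^m$. For $K$-regularity, given a complex $2$-plane $P$, I would assign to each unit $w\in \C_+^m\cap P$ the coordinate form $m_{i(w)}:w\mapsto w_{i(w)}$ maximizing $|w_{i(w)}|$; the cross-inequalities $|w_i+w_k|\geq |w_i-w_k|$ for all $k$ bound $\|w\|$ by a dimensional multiple of this maximum, and a compactness argument over planes delivers a uniform $K=K(m)$.

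The heart of the argument is the isometry. For $f,g\in C_0-\{0\}$ with positive coordinates and $z=x+iy$, a direct expansion gives
\[ \mathfrak{Re}\bigl((zf_i - g_i)\overline{(zf_j - g_j)}\bigr) \;=\; (xf_i - g_i)(xf_j - g_j) + y^2 f_i f_j. \]
Setting $\lambda_k := g_k/f_k$ with range $[\lambda_{\min},\lambda_{\max}]$, the most constraining pair $(i,j)$ is the one realizing these extremes, and $zf-g\notin\C_+^m$ reduces to $y^2 < (\lambda_{\max}-x)(x-\lambda_{\min})$. Hence $E(f,g)$ is the open disk with real diameter $[\lambda_{\min},\lambda_{\max}]$, giving $a=\lambda_{\min}$, $b=\lambda_{\max}$, so $\vt_{\C_+^m}(f,g)=\ln(\lambda_{\max}/\lambda_{\min})$. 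On the other hand, the real affine line $\{(1-t)f + tg\}$ exits the orthant at $t_-=1/(1-\lambda_{\max})$ and $t_+=1/(1-\lambda_{\min})$, and the cross-ratio $[0,1,t_+,t_-]$ evaluates to $\lambda_{\max}/\lambda_{\min}$, so $\vt(f,g)=\vt_{\C_+^m}(f,g)$ and $\pi$ is isometric.

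The main obstacle I anticipate is the uniform $K=K(m)$ in the regularity condition: a pointwise coordinate bound is straightforward, but uniformity over all complex $2$-planes $P\subset\C^m$ demands either an extremal argument over planes spanned by vectors with very uneven coordinate magnitudes, or direct appeal to Rugh's explicit estimates in \cite{Ru}. A secondary subtlety is the perturbation needed in linear convexity, required precisely because the naive hyperplane $H_c$ can meet $\C_+^m$ along its $w_i=0$ slice.
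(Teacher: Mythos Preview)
The paper does not prove this proposition at all: it is quoted directly from \cite{Ru}, Corollary~5.6 and \cite{Du}, Remark~3.6, and no argument is supplied beyond the citation. So there is nothing to compare on the paper's side.

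Your isometry computation is correct and is the substantive content. The identity
\[
\mathfrak{Re}\bigl((zf_i-g_i)\overline{(zf_j-g_j)}\bigr)=(xf_i-g_i)(xf_j-g_j)+y^2f_if_j
\]
for real $f,g$ and $z=x+iy$ is right, and it does identify $E(f,g)$ with the open disk on real diameter $[\lambda_{\min},\lambda_{\max}]$, whence $\vt_{\C_+^m}(f,g)=\ln(\lambda_{\max}/\lambda_{\min})$, which is the classical Hilbert metric on the positive orthant. That is exactly the content of the isometric embedding.

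Where your sketch is incomplete is precisely where you say it is. For linear convexity, the hyperplane $H_c=\{w_j=cw_i\}$ with $\mathfrak{Re}(c)<0$ does meet $\C_+^m$ nontrivially along $\{w_i=w_j=0\}$ (any positive vector with those two coordinates zero lies in $\C_+^m\cap H_c$), and a ``generic perturbation'' is not enough on its own: you need to exhibit a complex hyperplane disjoint from $\C_+^m$, and the construction in \cite{Du} uses a linear form of the type $w\mapsto w_j-cw_i$ rather than the incidence condition $w_j=cw_i$. For $K$-regularity your pointwise bound $\|w\|\le K_m|w_{i(w)}|$ (which follows from $\mathfrak{Re}(w_k\overline{w_{i(w)}})\ge 0$ together with $|w_k|\le|w_{i(w)}|$) does give a uniform constant over all planes $P$, since one may fix a single index $i_0$ realizing the maximum for one chosen vector in $\C_+^m\cap P$ and use $m=e_{i_0}^\ast$ for the whole of $P$; but this still needs to be written out rather than deferred to a compactness argument.
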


Moreover,  \cite{Du} studies and  characterizes the $m\times m $ matrices which preserve $\C_+^m $. We need the following properties. Let $A$ be a $m\times m $ matrix with all $0$ entries in $m'$ full columns and $\l_1, \cdots, \l_m $ the $(m-m')$-row vectors made up of the nonzeros entries of the row vectors of $A$. Set:
$$ \d_{k,l} := \vt_{\C_+^{m-m'}}  (\l_k, \l_l ), \quad \D_{k,l} := {\textrm {Diam}}_{{\textrm{RHP}}} \Big\{ \frac {\< \l_k, x \>}{\< \l_l, x \>} ; x \in (\C_+^{m-m'})^\ast, x \not = 0 \Big\},$$
where ${\textrm {Diam}}_{{\textrm{RHP}}}$ denotes the diameter with respect to the Poincar\'e metric of the right half-plane.  Observe that ${\textrm {Diam}}_{\vt_{\C_+^{m}}} (A(\C_+^m - \{0\})) = {\textrm {Diam}}_{\vt_{\C_+^{m}}} (A(\C_+^{m-m'} - \{0\})).$ Then, we have (\cite{Du}, Proposition 3.5):
\begin{equation}\label{diam1}
{\textrm {Diam}}_{\vt_{\C_+^{m}}} (A(\C_+^m - \{0\})) \; \leq \; \max_{k,l} \d_{k,l} + 2 \max _{k,l} \D_{k,l} \;\leq \; 3  {\textrm {Diam}}_{\vt_{\C_+^{m}}} (A(\C_+^m - \{0\})) .
\end{equation}
From the proof of Proposition 3.5 in \cite{Du}, in particular from equation (3.12), it also follows that for a real matrix $A$:
$${\textrm {Diam}}_{\vt_{\C_+^{m}}} (A(\C_+^m - \{0\}))  \;\leq \; 3  {\textrm {Diam}}_\vt (A(\R_+^m - \{0\})) .$$
Fix $p \in \M(B)$. We choose $\g= \g(p) <1 $ such that $$ 9 (\tanh)^{-1} \b_0 < (\tanh)^{-1} (\g ^{2r}). $$
Then for the  real matrices $A = A_1(p), \cdots, A_N(p) $,
\begin{equation}\label{diam2} 
 3 {\textrm {Diam}}_{\vt_{\C_+^{m}}} (A(\C_+^m - \{0\}))  \leq  9 {\textrm {Diam }}_\vt (A(\R_+^m - \{0\}))  \leq 36  (\tanh)^{-1} \b_0 < 4 (\tanh)^{-1} (\g ^{2r}).
 \end{equation}

\begin{prop}\label{Lip} Fix $p \in \M(B).$ There is a neighborhood $\O_p$ of $p$ in $\C^B$ such that the  mapping $ p \mapsto \Phi _p $  extends to an analytic mapping from $\O_p$  into $\G_{\g (p)}$. \end{prop}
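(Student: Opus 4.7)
The plan is to extend formula (\ref{Derriennic}) to complex $q \in \O_p$ by analytically continuing each ingredient and then reinterpreting the infinite matrix product as a projective contraction inside the complex cone $\C_+^m$ from Proposition \ref{ccone}. By Propositions \ref{cont} and \ref{cont2}, the vectors $\a(\xi;q)$, $\a_1(\xi;q)$, $u_{V_k(\xi)}^{y_k}(q)$ and the matrices $A_j(q)$ admit analytic continuations to a neighborhood of $p$ in $\C^B$, with $A_j(q)$ keeping the same configuration of zero columns as $A_j(p)$.

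Since the diameter bound (\ref{diam2}) is strict at $q=p$ and the property of sending $\C_+^m\setminus\{0\}$ into itself with finite $\vt_{\C_+^m}$-diameter is stable under small perturbations of the matrix entries (via the perturbation analysis of regular linearly convex cones in \cite{Ru}, \cite{Du}), one can shrink $\O_p$ so that for every $q \in \O_p$ and every $j$
$$\textrm{Diam}_{\vt_{\C_+^m}}\bigl(A_j(q)(\C_+^m\setminus\{0\})\bigr) \;<\; 4\,(\tanh)^{-1}(\g^{2r}).$$
By (\ref{ccontr}) each projective operator $T_{j_s}(\xi;q)$ then contracts $\vt_{\C_+^m}$ by a factor strictly less than $\g^{2r}$. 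Iterating, for any starting direction in $\C_+^m$ the image under $T_{j_1}(\xi;q)\cdots T_{j_{k-1}}(\xi;q)$ has $\vt_{\C_+^m}$-diameter at most $C\g^{2r(k-1)}$, and (\ref{compar}) converts this into norm convergence of the normalized image. Hence
$$R_k(\xi;q) \;:=\; \frac{\langle \a_1(\xi;q), T_{j_1}(\xi;q)\cdots T_{j_{k-1}}(\xi;q)\, u_{V_k(\xi)}^{y_k}(q)\rangle}{\langle \a(\xi;q), T_{j_1}(\xi;q)\cdots T_{j_{k-1}}(\xi;q)\, u_{V_k(\xi)}^{y_k}(q)\rangle}$$
converges geometrically in $k$ to a limit $R_\infty(\xi;q)$, uniformly in $\xi$ and $q \in \O_p$. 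At $q=p$ the limit is a positive real bounded away from $0$, so after further shrinking $\O_p$ all $R_k(\xi;q)$ lie in a simply connected region of $\C$ missing $(-\infty,0]$, and we set $\Phi_q(\xi) := -\ln R_\infty(\xi;q)$ with an analytic branch of the logarithm.

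Repeating the reasoning of Proposition \ref{holder} inside the complex cone, if $\xi$ and $\xi'$ share their first $(n+1)r+1$ coordinates then the first $n$ operators $T_{j_s}(\cdot;q)$ coincide, and the cone contraction together with (\ref{compar}) yields $|\Phi_q(\xi) - \Phi_q(\xi')| \leq C\g^{2rn}$, showing $\Phi_q \in \G_{\g(p)}$ with $\G_{\g(p)}$-norm bounded uniformly over $\O_p$. For analyticity as a $\G_{\g(p)}$-valued function, each approximant $-\ln R_k(\cdot;q)$ is built from finitely many analytic scalar quantities and is therefore analytic into $\G_{\g(p)}$; the same contraction estimates show that $(-\ln R_k(\cdot;q))_k$ is Cauchy in $\G_{\g(p)}$ at a rate uniform in $q$, so its limit $\Phi_q$ depends analytically on $q$ in the Banach space sense.

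The main technical hurdle is the complex cone perturbation step: verifying that the complex matrices $A_j(q)$ really preserve $\C_+^m$ (not a nearby but distinct cone) with a controlled $\vt_{\C_+^m}$-diameter for every $q$ in an open neighborhood of $p$. The margin built into the choice of $\g(p)$ via the factor $9$ in $9(\tanh)^{-1}\b_0 < (\tanh)^{-1}(\g^{2r})$ is designed precisely to absorb this perturbation, but making the argument rigorous requires either invoking a stability theorem for regular linearly convex cones from \cite{Du} or \cite{Ru}, or arguing directly from the defining inequalities $|u_i + u_j| \geq |u_i - u_j|$ together with Proposition \ref{zeros}.
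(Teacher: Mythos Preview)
Your strategy is essentially the paper's own: analytically continue the building blocks, replace Birkhoff's real contraction by the Dubois--Rugh contraction (\ref{ccontr}) in $\C_+^m$, and show the approximants form a Cauchy sequence in $\G_{\g(p)}$ uniformly in $q$. Two refinements are worth noting.

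First, the paper replaces your varying vector $u_{V_k(\xi)}^{y_k}(q)$ by the \emph{fixed} vector $f_0=(1/\sqrt{|B|},\dots,1/\sqrt{|B|})$, using that the projective limit in (\ref{Derriennic}) does not depend on the initial direction. This buys real simplifications: $f_0$ needs no analytic continuation, sits in the interior of $\C_+^m$ for every $q$, and makes each approximant $\Phi_{q,k}$ depend on only finitely many coordinates of $\xi$, so that its membership in $\G_\g$ and its analyticity in $q$ are immediate.

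Second, on the point you flag as the ``main technical hurdle'': the paper does \emph{not} appeal to an abstract stability theorem for regular linearly convex cones. It uses the explicit two-sided inequality (\ref{diam1}) from \cite{Du}, Proposition~3.5, which bounds $\mathrm{Diam}_{\vt_{\C_+^m}}(A(\C_+^m\setminus\{0\}))$ in terms of the quantities $\d_{k,l}$ and $\D_{k,l}$ computed from the nonzero row vectors of $A$. These are continuous in the nonzero entries; since Proposition~\ref{cont2} guarantees that the zero-column configuration of $A_j(q)$ is frozen on $\O_p$, the strict inequality (\ref{diam2}) at $q=p$ (with the factor $9$ providing the margin you noticed) persists on a possibly smaller neighborhood. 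Separately, the paper checks that $0\notin A_j(q)(\DD\cup\{f_0\})$ on $\O_p$ so that the projective maps $T_j(q)$ are actually defined where needed, and then runs a compactness argument (its Step~2) to show the scalar products $\langle\a(q,\xi),g\rangle$, $\langle\a_1(q,\xi),g'\rangle$ stay bounded away from zero---this is somewhat more delicate than your one-line ``shrink $\O_p$'' and is where most of the work in the paper's proof lies.
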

\begin{proof} We first extend $A_j, j =1, \cdots, N$ analytically on a neighborhood $\O_p$ by Proposition \ref{cont2}. Set $S = S^{2m-1} = \{ f; f \in \C_+^m, \|f\| = 1 \}.$ For each $A_j (q), j = 1, \cdots N, q \in \O_p$ and each $f \in S $ such that $A_j(q) f \not = 0 $, we define again $T_j (q)f $ by:
$$ T_j (q) f \; = \; \frac{A_j(q)f}{\|A_j(q)f\|}.$$           For $p \in \M(B)$, the function  $\Phi _p$ is given by the  limit from formula (\ref{Derriennic}):
$$\Phi _p(\xi ) =  -\ln \lim _{k \to \infty } \frac{ \< \a_1(\xi), T_{j_1}(\xi )\cdots T_{j_{k-1}}(\xi )  f_0 \> } { \< \a(\xi), T_{j_1}(\xi )\cdots T_{j_{k-1}}(\xi )  f_0 \> },$$
where  $f_0 \in S$  the column vector $\{ 1/\sqrt {|B|}, \cdots, 1/\sqrt{|B|}\}$: we use the fact that the limit of $ T_{j_1}(\xi )\cdots T_{j_{k-1}}(\xi )  f $ does not depend on the initial point $f$. 

We have to show that this limit extends  on some neighborhood $\O_p$ of $p$ to an analytic function into $\G_\g$.  Set $$\Phi _{p,k}(\xi )\;  := \; -\ln  \frac{ \< \a_1(\xi), A_{j_1}(\xi )\cdots A_{j_{k-1}}(\xi )  f_0  \> } { \< \a(\xi), A_{j_1}(\xi )\cdots A_{j_{k-1}}(\xi )  f_0 \> }.$$   We are going to find $\O_p$ and $k_0$ such that, for $k \geq k_0$, the functions $\Phi _{p,k} (\xi)$ extend to analytic functions from $\O_p$ into $\G_{\g}$ and, as $k \to \infty $, the functions $\Phi _{p,k} (\xi)$  converge in $\G_{\g}$ uniformly on $\O_p$.

The functions $q \mapsto  \< \a_1(\xi), A_{j_1}(\xi )\cdots A_{j_{k-1}}(\xi )  f_0  \> , q \mapsto \< \a(\xi), A_{j_1}(\xi )\cdots A_{j_{k-1}}(\xi )  f_0 \> $ are polynomials in $q$ and depend only on a finite number of coordinates of $\xi $. Therefore, if we can find a neighborhood $\O_p$ and a $k$ such that  these two functions do not  vanish, then   $\Phi_{p,k} $ extends to an analytic function from $\O_p$ to $\G_\g$.

\

{\it Step1:  Contraction }

By   (\ref{diam1}), (\ref{diam2}) and Proposition \ref{cont2},  we can choose a neighborhood $\O_p$ such that for $q \in \O_p$, the diameter $\D$ of $A_j(q) \C_+^m $ is smaller than $4 (\tanh)^{-1}(\g^{2r})$ for all $j = 1, \cdots , N.$\footnote{One can also use directly \cite{Du2}, Theorem 4.5.} The set $\DD := S \cap \big( \cup _{j} A_j(p) \C_+^m\big)$ is compactly contained in the interior of $S$. We choose a smaller neighborhood $\O_p$  such that, if $q \in \O_p$, $$ \D < 4 (\tanh)^{-1}(\g^{2r}) \quad {\textrm {and}} \quad 0 \not \in A_j (\DD \cup \{f_0\})  \; {\textrm { for }} j = 1, \cdots, N.$$
For $q \in \O_p$, the projective images $T_{j_1}(\xi )\cdots T_{j_{k-1}}(\xi )  f_0 $ are all defined and we have, by repeated application of (\ref{ccontr})
$$\vt_\CC \left(T_{j_1}(\xi )\cdots T_{j_{k-1}}(\xi )  f_0 , T_{j_1}(\xi )\cdots T_{j_{k-1}}(\xi )  f _{k,k'}(\xi )\right) \; \leq \; \g^{2(k-1)r} \vt (f_0,f _{k,k'}(\xi)),$$
where $k'>k$ and $f_{k,k'} (\xi) :=  T_{j_k}(\xi )\cdots T_{j_{k'-1}}(\xi )  f_0.$   The $f_{k,k'} (\xi) $ are all in $\DD$. Then,  $\vt _\CC (f_0, f_{k,k'}(\xi) ) \leq C$ for all $\xi \in \partial F$, all $k, k' \geq 1$. Set 
 $$g = T_{j_1}(\xi )\cdots T_{j_{k-1}}(\xi )  f_0 , \quad g' = T_{j_1}(\xi )\cdots T_{j_{k-1}}(\xi )  f _{k,k'}(\xi ).$$   For all $\xi \in \partial F$, all $k, k' \geq 1$, consider the number $\rho (\xi, k , k')$ associated by Proposition \ref{cdist} to $g$ and $g'$. We have, by (\ref{compar}):
 $$ |\rho(\xi,k,k') | = 1 \quad {\textrm {and}} \quad \| \rho (\xi, k, k') g -g'\| \; \leq \; K C \g^{2kr}.$$
 Since $\a (p,\xi)$ and $\a_1 (p, \xi)$ take finite many values, it follows that
 \begin{eqnarray*}&& |\< \a(p,\xi), g\> \<\a_1(p,\xi), g'\> - \< \a(p,\xi),g'\> \<\a_1(p,\xi),g\>| \\ &=& |\< \a(p,\xi), \rho(\xi,k,k') g\> \<\a_1(p,\xi) ,g'\> - \< \a(p,\xi),g'\> \<\a_1(p,\xi),\rho(\xi,k,k')g\> |\\&\leq &\;  K C \g^{2kr}. \end{eqnarray*}
 Since $g$ and $g'$ are in the compact set $\DD \cup\{f_0\}$, we can, by Proposition \ref{cont}, choose a neighborhood $\O_p$ such that, for all $q \in \O_p $, all $\xi \in \partial F$, all $k < k'$
 \begin{eqnarray*}\label{close}
 && \big|\< \a(\xi),  T_{j_1}(\xi )\cdots T_{j_{k-1}}(\xi )  f_0\> \<\a_1(\xi),  T_{j_1}(\xi )\cdots T_{j_{k'-1}}(\xi )  f_0 \> \\ &\quad&- \< \a(\xi), T_{j_1}(\xi )\cdots T_{j_{k'-1}}(\xi )  f_0\> \<\a_1(\xi), T_{j_1}(\xi )\cdots T_{j_{k-1}}(\xi )  f_0\>\big| \; \leq \;  K C \g^{2kr}.
\end{eqnarray*}

\

{\it Step 2:  The $\Phi _{q,k}$ extend}

Recall that $D $ is the set of unit vectors in the positive quadrant. For $g,g' \in \cup _jT_j(p)(D) \cup \{f_0\}$, $\< \a(p,\xi), g\> \<\a_1(p,\xi), g'\> $ is real positive and bounded away from 0 uniformly in $\xi, g $ and $g'$. Recall the isometric inclusion $\pi : D \to S $ of Proposition \ref{ccone}. There is a neighborhood $\CC _0 $ of $\pi \big(\cup_j T_j(p)(D) \cup \{f_0\}\big)$ in $S$  and $\d >0$ such that for $g,g' \in \CC _0$, 
$\big| \< \a(p,\xi), g\> \<\a_1(p,\xi), g'\>\big|  > \d .$
Of course, we can take $\CC _0$ invariant by multiplication by all $z$ with $|z | = 1$. Then, there exists $\e >0 $ such that if $\vt _{C_+^m} \big(g, \pi (\cup_j T_j(p)(D) \cup \{f_0\} ) \big)<\e, \vt _{C_+^m} \big(g', \pi (\cup_j T_j(p)(D) \cup \{f_0\} ) \big)<\e,$ then $\big| \< \a(p,\xi), g\> \<\a_1(p,\xi), g'\>\big|  > \d/2. $

For $q \in \O_p$ and $k_0 > 1 + \ln(\e /2) / 2r \ln \g$, the $\vt _{\C_+^m}$-diameter of each one of the sets $T_{j_1}(q,\xi )\cdots T_{j_{k_0-1}}(q,\xi ) S$ is smaller than $\e /2$, for all $\xi$. As $\xi $ varies, there is only a finite number of mappings $T_{j_1}(q,\xi )\cdots T_{j_{k_0-1}}(q,\xi ) $. By continuity of $q \mapsto T_j$ (where the  $T_j$s now are considered as  mappings from $\CC /\sim$ into itself), there is a neighborhood $\O_p$ such that for $q \in \O_p$, the Hausdorff distance between $T_{j_1}( q,\xi )\cdots T_{j_{k_0-1}}(q,\xi ) S/ \sim$ and 
$T_{j_1}( p,\xi )\cdots T_{j_{k_0-1}}(p,\xi ) S/ \sim$ is smaller than $\e/2$. It follows that if $q \in \O_p$, and $g,g'$ are in the same $T_{j_1}( q,\xi )\cdots T_{j_{k_0-1}}(q,\xi ) S$ for some $\xi$, then
$$\big| \< \a(p,\xi), g\> \<\a_1(p,\xi), g'\>\big|  > \d/2. $$
By taking a possibly smaller $\O_p$, we have that if $q \in \O_p$, and $g,g'$ are in the same $T_{j_1}( q,\xi )\cdots T_{j_{k_0-1}}(q,\xi ) S$ for some $\xi$, then
$$\big| \< \a(q,\xi), g\> \<\a_1(q,\xi), g'\>\big|  > \d/4. $$
In particular this last  expression does not vanish and $\Phi _{q,k}$ is an analytic function on $\O_p$ for $k \geq k_0$.

\

{\it Step3: The $\Phi _{q,k} $ converge uniformly on $\partial F$} 

Take a neighborhood $\O_p$ and $k_0$ such that for $q \in \O_p$ the conclusions of Steps 1 and 2 hold. We claim that for all $\e >0 $, there is $k_1$  such that for $k,k' \geq k_1$, $q \in \O_p$, $\max _\xi |\Phi _{q,k}(\xi ) - \Phi _{q,k'}(\xi )| < \e $. Suppose $k_1 > k_0$. We have to estimate $$ \max _{\xi} \Big| \ln \frac{ \< \a(\xi), T_{j_1}(\xi )\cdots T_{j_{k-1}}(\xi )  f_0  \> }{ \< \a(\xi), T_{j_1}(\xi )\cdots T_{j_{k'-1}}(\xi )  f_0 \> }
\frac{ \< \a_1(\xi ), T_{j_1}(\xi )\cdots T_{j_{k'-1}}(\xi )  f_0\> }{ \< \a_1(\xi), T_{j_1}(\xi )\cdots T_{j_{k-1}}(\xi )  f_0 \> }\Big|.$$
By the conclusions of Steps 1 and 2, this quantity is smaller that $C\max \{\g^{2kr}, \g^{2k'r} \}$. This is smaller than $\e$ if $k_1$ is large enough.

 \

{\it Step 4: The $\Phi _{q,k} $ converge in norm $\|.\|_{\g(p)}$} 

 With the same $\O_p, k_0$, we now claim that for all $\e >0 $, there is $k_2 = \max \{k_0,  \ln \g / r\ln \e \}$ such that for $k,k' \geq k_2$ and $q \in \O_p$, $ \|\Phi _{q,k}(\xi ) - \Phi _{q,k'}(\xi )\|_\g < \e $.  Let $\xi, \xi' $ be two points of $\partial F$ with $\d(\xi , \xi ' ) \leq \exp (-((n+1) r+1))$. We want to show that there is a constant $C$ independent on $n$, such that, for all $ q\in  \O_p$, all $k,k' \geq k_2$:
$$ |\Phi _{q,k}(\xi) - \Phi _{q,k'}(\xi) - \Phi _{q,k}(\xi') + \Phi _{q,k'} (\xi ') | \; \leq \; C \g ^{(n+1)r +1}\e.$$
Since $k,k' \geq k_0$, the difference $\Phi _{q,k} (\xi) - \Phi _{q,k'} (\xi ) $ is given by:
$$ \Phi _{q,k} (\xi) - \Phi _{q,k'} (\xi ) =  \ln  \frac {\< \a_1, T_{j_1}\cdots T_{j_{k'-1}}  f_0 \>} {\< \a_1, T_{j_1}\cdots T_{j_{k-1}}  f_0 \>} \frac  {\< \a, T_{j_1}\cdots T_{j_{k-1}}  f _0\>}  {\< \a, T_{j_1}\cdots T_{j_{k'-1}} f_0 \>}. $$

For $k,k' \leq n+1$, $ \Phi _{q,k} (\xi) - \Phi _{q,k'} (\xi ) =  \Phi _{q,k} (\xi') - \Phi _{q,k'} (\xi' ) $, and there is nothing to prove. 

Assume $k' >k \geq n+1 $, Step 3 shows that both  $|\Phi _{q,k}(\xi) - \Phi _{q,k'}(\xi)|$ and $|\Phi _{q,k}(\xi') - \Phi _{q,k'} (\xi ')|$ are smaller than $C \g^{2k r } \leq C\g^{nr} \g ^{kr}\leq  C\g^{nr}\e$.

The remaining case, when $k_0 \leq  k \leq n+1 \leq k'$, clearly follows from the other two and this shows Step 4.

\

 Finally we have that  the functions $\Phi _{p,k} $ are analytic and converge uniformly in $\G_\g $ on a neighborhood $\O _p $ of $p$.
The limit is an analytic function on $ \O_p$.

\end{proof} 

\section{Proof of Theorem \ref{main}}
In this section, we consider $\partial F$ as a subshift of finite type and let $\tau $ be the shift transformation on $\partial F$: 
$$ \tau \xi = \eta _1 \eta _2 \cdots  \; \; {\textrm { with }} \; \; \eta _n = \xi _{n+1}.$$
For $\g < 1$ and $\phi \in \G_\g$ with real values, we define the transfer operator $\LL _\phi $ on $\G_\g $ by 
$$ \LL _\phi \psi (\xi ) \; := \; \sum _{\eta \in \tau^{-1}\xi } e^{\phi (\eta ) } \psi (\eta) . $$
Then, $\LL _\phi $ is a bounded operator in $\G_\g $ and, by Ruelle's transfer operator theorem (see e.g. \cite{Bo}), there exists a number $P(\phi ) $, a  positive function $h_\phi \in \G_\g$ and an unique linear functional  $\nu _\phi $ on $\G_\g$ such that:
$$ \LL _\phi  h_\phi = e^{P(\phi )} h_\phi,\quad \LL^\ast _\phi  \nu_\phi = e^{P(\phi )} \nu_\phi  \quad {\textrm{and}} \quad \nu _\phi (1) = 1. $$
The functional $\nu _\phi $ extends to probability measure on $\partial F$ and is the only eigenvector of $\LL^\ast_\phi $ with that property.
Moreover, $\phi \mapsto \LL_\phi $ is a real  analytic  map from $\G_\g $ to the space of linear operators on $\G_\g$ (\cite{R}, page 91). Consequently,   the mapping $\phi \mapsto \nu_\phi $ is real analytic  from $\G_\g$ into the dual space $\G_\g^\ast $ (see e.g. \cite{C}, Corollary 4.6). By Proposition \ref{Lip}, the mapping $p \mapsto \nu _{\Phi _p} $ is real analytic  from a neighborhood of $p$ in $\M(B)$   into the space $\G_{\g(p)}^\ast $.

The main observation is that, for all $p \in \M(B)$, $\LL^\ast_{\Phi _p}p^\infty = p^\infty $; this implies that  $P(\Phi _p ) = 0 $  and that the distribution  $\nu _{\Phi _p}$ is the restriction of  the measure $p^\infty $ to any $\G_\g$ such that $\Phi _p \in \G_\g$. Indeed, we have:
$$ \frac{ d\tau_\ast p^\infty }{ dp^\infty } (\xi ) = \frac {d(\xi _1)_\ast p^\infty }{dp^\infty } = K_{\xi } (\xi _1) = e^{\Phi _p(\xi )}$$
so that, for all continuous $\psi$:
$$ \int (\LL _{\Phi _p} \psi ) dp^\infty  =  \sum _{a} \int _{a\xi , \xi_1 \not = a^{-1}} \frac {dp^\infty (a\xi ) } { dp^\infty (\xi )} \psi (a\xi) dp^\infty (\xi ) = \int \psi dp^\infty. $$
Recall the equations (\ref{entropy}) and (\ref{drift}) for $h_p $ and $\ell _p$. $\ell _p $ is given by a finite sum  (in $x$) of integrals with respect to  $p^\infty $ of the  functions $\xi \mapsto \th _\xi (x) $. Since these functions only depend on a finite number of coordinates in $\partial F$, they belong to $\G_\g$ for all $\g < 1$.  Since $p \mapsto \nu _{\Phi _p} $ is real analytic  from a neighborhood of $p$ into $\G_{\g(p)}^\ast$,  $p\mapsto \ell _p $ is real analytic on a neighborhood of $p$. Since this is true for all $p \in \M(B)$,  the function $p\mapsto \ell _p $ is real analytic on $\M(B)$. 

The argument is the same for $h_p$, since the function $\ln  \frac {dx^{-1}_\ast p^\infty }{dp^\infty } (\xi) = \ln K_\xi ( x^{-1})  \in \G_\g $ for all $x$ and for all $\g, \b <\g <1$ and the mappings $p \mapsto \ln  K_\xi ( x^{-1}) $ are real analytic  from a neighborhood of $p$  into $\G_{\g(p)} $. Indeed, $ \ln K_\xi (\xi _1) \in \G_\b $ by Proposition \ref{holder} and $p \mapsto \ln K_\xi (\xi _1 ) $ is real analytic  into $\G_{\g(p)} $ by Proposition \ref{Lip}. Moreover, 
if $a$ is a generator different from  $\xi _1 $, $ \ln K_\xi (a) = -  \ln K_{a^{-1}\xi } (a^{-1})$ also lies in $\G_\b$ and $p \mapsto \ln K_\xi (a) $  is also real analytic  into $\G_{\g(p)} $. For a general $x \in F$, $x= a_1 \cdots a_t$, write 
$$ K_\xi (x^{-1}) \; = \; K_\xi ( a_t^{-1} \cdots a_1^{-1}) \; = \; K_\xi ( a_t^{-1})  K_{a_t \xi } (a_{t-1}^{-1}) \cdots  K_{a_2\cdots a_t\xi }(a_1^{-1}).$$

\

This completes the proof of Theorem \ref{main}. For the proof of Theorem \ref{expo}, fix an origin $o \in \H^k$. Then, $\pi (F)o $ accumulates to the boundary of $\H^k$ in a Cantor set $\L$ called the limit set of $\pi (F)$. The mapping $\pi _o : F \to \H^n, \pi _o(x) = x.o$ extends to a H\"older continuous mapping $\pi _o$ from $\partial F$ to the limit set $\L$ of $\pi (F)$. We can express the exponent $\g_p$ as:
$$
\g_p \; = \; \lim_n \frac{1}{2n} \sum _{x\in F} d(o, \pi _o(x)) p^{(n)}(x),
$$ 
where the distance $d$ is the hyperbolic distance in $\H^k$. We obtain, in the same way as for formula (\ref{drift}),
\begin{eqnarray*}
\g_p \; &=& \; \frac{1}{2} \sum _{x \in F} \left( \int _{\L}\Theta _\zeta (\pi _o(x^{-1})) d((\pi _o)_\ast p^\infty) (\zeta)\right) p(x) \\ &=& \; \frac{1}{2} \sum _{x \in F} \left( \int _{\partial F}\Theta _{\pi _o(\xi)} (\pi _o(x^{-1})) d(p^\infty) (\xi)\right) p(x),
\end{eqnarray*}
where $\Theta _\zeta$ is now the Busemann function of $\H^k$: $\Theta _\zeta (z) := \lim _{w \to \zeta} d(w,z) - d(w,o).$  Since, for all $x \in F$, the  function $\xi \mapsto \Theta_{\pi _o(\xi)} ( \pi _o(x))$ is a $\rho$-H\"older continuous function for some fixed $\rho$, we deduce as above that $p \mapsto \g_p$ is real analytic on $\M(B)$.

\small


\begin{thebibliography}{99}
\bibitem[{\bf Bi}]{B} G. Birkhoff, Extension of Jentzsch's theorem, {\em Trans. Amer. Math. Soc.,} {\bf 85} (1957), 219--227.
\bibitem[{\bf Bo}]{Bo} R. Bowen, Equilibrium states and the ergodic theory of Anosov Diffeomorphisms, {\em Lecture Notes in Math.,} {\bf 470} (1974), Springer, ed.
\bibitem[{\bf Co}]{C} G. Contreras, Regularity of topological and metric entropy of hyperbolic flows, {\em Math. Z.,} {\bf 210} (1992), 97--111.
\bibitem[{\bf De1}]{De1} Y. Derriennic, Marche al\'eatoire sur le groupe libre et fronti\`ere de Martin, {\em Z. Wahrscheinlichkeitstheorie verw. Geb.} {\bf 32} (1975), 261--276.
\bibitem[{\bf De2}]{De2} Y. Derriennic, Quelques applications du th\'eor\`eme ergodique sousadditif, {\em Ast\'erisque} {\bf 74} (1980), 183--201.
\bibitem[{\bf DG}]{DG} Y. Derriennic and Y. Guivarc'h, Th\'eor\`eme du renouvellement pour les groupes non-moyennables, {\em C.R. Acad. Sci. Paris, S\'er. A--B}, {\bf 277} (1973) A613--A615.
\bibitem[{\bf Du1}]{Du} L. Dubois, Projective metrics and contraction principles for complex cones, {\em J. London Math. Soc. (2)} {\bf 79} (2009), 719--737.
\bibitem[{\bf Du2}]{Du2} L. Dubois, An explicit Berry-Ess\'een bound for uniformly expanding maps on the interval, {\em Israel J. Math.}, to appear.
\bibitem[{\bf DM}]{DM} E. B. Dynkin and M. B. Malyutov, Random walks on groups with a finite number of generators, {\em Dokl. Akad. Nauk SSSR}, {\bf 137} (1961), 1042--1045.
\bibitem[{\bf Er}]{E} A. Erschler, On continuity of range, entropy and drift for random walks on groups, {\em preprint.}
\bibitem[{\bf EK}]{EK} A. Erschler and V. A. Kaimanovich, Continuity of entropy for random walks on hyperbolic groups, {\em in preparation.}
\bibitem[{\bf Fu}]{F} H. Furstenberg, Random walks and discrete subgroups of Lie groups, {\em Advances Probab. Related Topics,} {\bf 1} (1971), 1--63.
\bibitem[{\bf G1}]{G1} L. A. Gilch, Rate of escape of random walks on free products, {\em J. Aust. Math. Soc.,} {\bf 83(I)} (2007), 31--54. 
\bibitem[{\bf G2}]{G2} L. A. Gilch, Asymptotic entropy of random walks on free products, {\em preprint}.
\bibitem[{\bf H}]{H} H. Hennion, D\'erivabilit\'e du plus grand exposant caract\'eristique des produits de matrices al\'eatoires ind\'ependantes \`a co\"efficients positifs. {\em Ann. Inst. H. Poincar\'e, Probab Statist.,} {\bf 27} (1991), 27--59.
\bibitem[{\bf Ka}]{K} V.A.  Kaimanovich, The Poisson formula for groups with hyperbolic properties, {\em Ann. Math.,} {\bf 152} (2000), 659--692.
\bibitem[{\bf KL}]{KL} A. Karlsson and F. Ledrappier, Drift and entropy for random walks, {\em Pure Appl. Math. Quarterly,} {\bf 3} (2007), 1027--1036.
\bibitem[{\bf Le}]{Le} F. Ledrappier, Some Asymptotic properties of random walks on free groups, {\it in} Topics in Probability and Lie Groups, J.C. Taylor, ed. {\em CRM Proceedings and Lecture Notes,} {\bf 28} (2001), 117--152.
\bibitem[{\bf M}]{M} J. Mairesse, Randomly growing braid on three strands and the manta ray, {\em Ann. Appl. Probab.,} {\bf 17} (2007), 502--536.
\bibitem[{\bf MM}]{MM} J. Mairesse and F. Math\'eus, Random walks on free products of cyclic groups, {\em J. London Math. Soc.,} {\bf 75} (2007), 47--66.
\bibitem[{\bf P}]{P} Y. Peres, Domains of analytic continuation for the top Lyapunov exponent,  {\em  Ann. Inst. H. Poincar\'e, Probab Statist.,} {\bf 28} (1992), 131--148.
\bibitem[{\bf R1}]{R} D. Ruelle, Thermodynamic formalism, Addison-Wesley, Reading, MA (1978).
\bibitem[{\bf R2}]{R2} D. Ruelle, Analyticity properties of the characteristic exponent of random matrix products, {\em Advances in Math.,} {\bf 32} (1979), 69--80.
\bibitem[{\bf R3}]{R3} D. Ruelle, Repellers for real analytic maps, {\em Ergod. Th. \& Dynam. Sys.,} {\bf 2} (1982) 99--107.
\bibitem[{\bf Ru}]{Ru} H.H. Rugh, Cones and gauges in complex spaces: Spectral gaps and complex Perron-Frobenius theory. {\em Ann. Math.}, {\bf 171} (2010), 1707--1752.

\end{thebibliography}
\end{document}